\title[A non-autonomous Hamiltonian diffeomorphism with roots]{A non-autonomous Hamiltonian diffeomorphism with roots of all orders}
\author{Nicolas Grunder}
\email{nicolas.grunder@unine.ch}
\author{Baptiste Serraille}
\address{Department of Mathematics, ETH-Z\"{u}rich, R\"{a}mistrasse 101, 8092 Z\"{u}rich, Switzerland.}
\email{baptiste.serraille@math.ethz.ch}
\newtheorem{thm}{Theorem}
\newtheorem{thmA}{Theorem}
\newtheorem{prop}{Proposition}[section]
\newtheorem*{prop*}{Proposition}
\newtheorem{lemma}[prop]{Lemma}
\newtheorem{sublemma}{Sub-lemma}[prop]
\newtheorem{rk}[prop]{Remark}
\newtheorem{coro}[prop]{Corollary}
\newtheorem{ques}{Question}
\newcommand\numberthis{\addtocounter{equation}{1}\tag{\theequation}}
\newcommand{\Symp}{\mathrm{Symp}}
\newcommand{\Ham}{\mathrm{Ham}}
\newcommand{\Homeo}{\mathrm{Homeo}}
\newcommand{\R}{\mathbb{R}}
\newcommand{\N}{\mathbb{N}}
\newcommand{\Z}{\mathbb{Z}}
\newcommand{\Diff}{\mathrm{Diff}}
\newcommand{\Auto}{\mathrm{Auto}}
\begin{document}
	\begin{abstract}
		We present a way of constructing non-autonomous Hamiltonian diffeomorphisms with roots of all orders by adapting the Anosov-Katok construction. This answers a question by Kathryn Mann and Egor Shelukin. Additionally, we construct an action of the rationals by diffeomorphism on any manifold that is not $ C^0$-continuous with respect to the Euclidean topology on $\mathbb Q$.
    \end{abstract}
	\maketitle

\section{Introduction}\label{sec: intro}

\subsection{Main result and motivation}\label{subsec: Main result}
Let $(M,\omega)$ be a symplectic manifold. We say that a Hamiltonian diffeomorphism is \textit{autonomous} when it is the time-1 map of a flow generated by a time-independent Hamiltonian. The set of all autonomous Hamiltonian diffeomorphisms is denoted by $\Auto(M,\omega)$.  Autonomous Hamiltonian diffeomorphisms enjoy properties that are generally not true for general Hamiltonian diffeomorphisms; for example, one can show that if $H$ is a time-dependent Hamiltonian, then each level set $H^{-1}(\{c\})$ for $c \in \R$ is preserved under the flow $\varphi^t_H$.

Let $(M,\omega)$ be a symplectic manifold. We say that a Hamiltonian diffeomorphism is \textit{autonomous} when it is the time-1 map of a flow generated by a time-independent Hamiltonian. The set of all autonomous Hamiltonian diffeomorphisms is denoted by $\Auto(M,\omega)$.  
In many ways, the set $\Auto(M,\omega)$ is believed to be small in the group of all Hamiltonian diffeomorphisms $\Ham(M,\omega)$, as first pointed out by Polterovich and Shelukhin \cite{PS}. This is a symplectic take on Palis' heuristic \cite{Pal}: ``Vector fields generate few diffeomorphisms". Polterovich and Shelukhin \cite{PS}, followed by Polterovich, Shelukhin and Stojisavljevi\'c \cite{PSS}, Zhang \cite{Zha}, Chor \cite{Cho20}, and Khanevsky \cite{Kha22} indeed showed that for a wide class of closed symplectic manifolds, there exist Hamiltonian diffeomorphisms arbitrarily far from elements of $\Auto(M,\omega)$ with respect to the Hofer norm. A key observation for the proof of the result of Polterovich and Shelukhin \cite{PS} is that all autonomous Hamiltonian diffeomorphisms admit roots of all orders. This comes from the fact that if the time-1 map of the Hamiltonian flow generated by $H \colon M \to \R$ is $\varphi$, then the time-1 map of the Hamiltonian flow generated by $H/k$ is a $k$-th root of $\varphi$. It is thus a natural question to ask whether the only maps that have roots of all orders are autonomous. In  \cite[Section 3]{She}, Shelukhin mentions that there are no known examples of non-autonomous Hamiltonian diffeomorphisms with roots of all orders. The question about their existence has been known in the community since around 2015 \cite{Sto} and was first asked by Kathryn Mann. In this paper, we show the existence of non-autonomous Hamiltonian diffeomorphisms of general symplectic manifolds that admit roots of all orders. Our main result is the following.

\begin{thm}\label{thm: Q to Ham}
For any symplectic manifold $(M,\omega)$ there exists an injective group homomorphism $$\Phi:\mathbb Q \to \Ham(M,\omega)$$ such that $\Phi(\mathbb Q)\cap \Auto(M,\omega) = \{id\}.$
\end{thm}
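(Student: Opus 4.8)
\emph{Reduction to a ball.} The plan is to localise the problem. Fix a symplectic embedding $\iota\colon (B^{2n},\omega_0)\hookrightarrow(M,\omega)$ of a ball and construct a homomorphism $\Phi_0\colon\mathbb{Q}\to\Ham_c(B^{2n})$ into the group of compactly supported Hamiltonian diffeomorphisms of $B^{2n}$, not identically the identity, such that for every $q\ne0$ the map $\Phi_0(q)$ has an \emph{isolated} fixed point $x_q$ with $d_{x_q}\Phi_0(q)\notin\exp\!\bigl(\mathfrak{sp}(2n,\R)\bigr)$; then $\Phi\coloneqq\iota_*\Phi_0$ (extended by the identity) is the required homomorphism. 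Indeed, were $\Phi(q)=\varphi^1_H$ for some smooth Hamiltonian $H$ on $M$, the isolated fixed point $x_q$ could not sit on a non-constant $1$-periodic orbit of $X_H$ — such an orbit is a whole circle of fixed points — so $X_H(x_q)=0$ and hence $d_{x_q}\Phi(q)=\exp\!\bigl(d_{x_q}X_H\bigr)\in\exp(\mathfrak{sp})$, a contradiction. Only the smoothness of $H$ near $x_q$ enters, so this criterion is blind to where $H$ is supported; and $\Phi(0)=\mathrm{Id}$ is clear, so $\Phi(\mathbb{Q})\cap\Auto(M,\omega)=\{\mathrm{Id}\}$.

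\emph{The linear model and the role of parity.} For $x_q$ I would aim at linearisations conjugate to $A_\lambda\coloneqq\operatorname{diag}(-\lambda,-\lambda^{-1})\oplus R_{\theta_1}\oplus\dots\oplus R_{\theta_{n-1}}$ with $\lambda>1$, $\theta_i/\pi\notin\mathbb{Q}$, and $R_\theta$ the rotation by $\theta$: the irrational rotations make the fixed point isolated, and $A_\lambda\notin\exp(\mathfrak{sp})$ because any $\exp(Y)$ with $Y$ real symplectic has the eigenvalue $-\lambda$ with even multiplicity while $A_\lambda$ has it with multiplicity one. The arithmetic point that drives the construction is that $A_\lambda$ has a real symplectic $q$-th root — namely $A_{\lambda^{1/q}}$, again of the same ``negative'' shape since $\lambda^{1/q}>1$ — exactly when $q$ is odd, and no real square root. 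So if $x$ is fixed by $\Phi_0(q_0)$ with $d_x\Phi_0(q_0)=A_\lambda$, then $\Phi_0(q)$ can keep $x$ as a fixed point only for $q/q_0$ of odd denominator, with eigenvalue $(-1)^{p}\lambda^{p/p'}$ on the hyperbolic block when $q/q_0=p/p'$ in lowest terms; this block is \emph{negative} hyperbolic — hence outside $\exp$ — precisely when $p$ is odd, i.e.\ when $q$ and $q_0$ have the same $2$-adic valuation. The cosets $\{q:v_2(q)=k\}$, $k\in\Z$ (with $v_2$ the $2$-adic valuation), thus partition $\mathbb{Q}\setminus\{0\}$, and it is natural to devote a separate region of $B^{2n}$ to each of them (the exhaustion $\mathbb{Q}=\varinjlim\tfrac1{n!}\Z$ gives a one-sided variant of the same bookkeeping).

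\emph{The local construction.} Concretely I would build $\Phi_0$ as a limit of finite products of Hamiltonian diffeomorphisms supported in a pairwise disjoint family of small balls $U_k\subset B^{2n}$: on $U_k$ one puts a rational one-parameter family having, at every parameter $q$ with $v_2(q)=k$, a fixed point whose hyperbolic block is conjugate to some $A_{\lambda_k}$, and at other parameters a fixed point with a positive-hyperbolic block, or no such fixed point at all, according to the parity analysis above. A fixed point of the $U_k$-piece stays isolated in $B^{2n}$ because the $U_k$ are disjoint, so each $\Phi_0(q)$, $q\ne0$, inherits the required bad fixed point from $U_{v_2(q)}$. Since $\mathbb{Q}=\varinjlim\tfrac1{n!}\Z$, specifying $\Phi_0$ is the same as specifying a tower $a_m\coloneqq\Phi_0(1/m!)$ with $a_{m+1}^{m+1}=a_m$; the ``odd root'' phenomenon is precisely what lets one pass from $a_m$ to a compatible root $a_{m+1}$ while preserving, on every region of appropriate valuation, a negative-hyperbolic block — the competing even-order roots being realised in each $U_k$ as small rotations, which is where the non-flow nature of the family lives.

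\emph{Main obstacle.} The crux is to reconcile two opposite demands. On the one hand a negative-hyperbolic block $\operatorname{diag}(-\lambda,-\lambda^{-1})$ with $\lambda>1$ is at distance $\ge 2$ from the identity in the $C^1$ sense and cannot be shrunk away, so the pieces do not tend to the identity on their own; on the other hand the infinitely many pieces must still assemble into a genuine \emph{smooth} diffeomorphism $\Phi_0(q)$ for every $q$, which forces the parameters $\lambda_k$ to tend to $1$ at a controlled rate as $|k|\to\infty$ — slowly enough that $-\lambda_k-\lambda_k^{-1}<-2$ persists (so $A_{\lambda_k}$ stays outside $\exp$), fast enough that (i) the positive-hyperbolic contribution $\lambda_k^{q/q_k}=\exp(O(1))$ coming from the regions $U_k$ with $v_2(U_k)<v_2(q)$ stays bounded, and (ii) $\sum_k\|\text{$k$-th factor of }\Phi_0(q)-\mathrm{Id}\|_{C^r}<\infty$ for every $r$ once the $U_k$ are taken with diameters decreasing at a suitably chosen rate. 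Producing explicit rational one-parameter models on the $U_k$ realising all of this simultaneously, and verifying the tower relations $a_{m+1}^{m+1}=a_m$ together with the $C^\infty$-convergence of the product, is the technical heart of the argument; I expect this balancing of the $\lambda_k$, the diameters of the $U_k$, and the ``speeds'' of the local models to be the main difficulty.
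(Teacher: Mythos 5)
Your criterion for ruling out autonomy — an isolated fixed point $x_q$ at which $d_{x_q}\Phi(q)$ lies outside $\exp(\mathfrak{sp}(2n,\R))$, by virtue of a negative real eigenvalue of odd multiplicity — is correct and genuinely different from the paper's. The paper instead arranges for every $\Phi(\ell)$, $\ell\in\Z_{\geq1}$, to have an orbit that is dense in a fixed open set $\mathcal U$, and then observes that an autonomous Hamiltonian is constant on orbit closures, hence constant on $\mathcal U$, forcing the time-one map to be the identity there. Both obstructions localize cleanly, and reducing to a Darboux ball is common to both.

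The gap is that your $\mathbb{Q}$-action is never constructed; you only prescribe the properties it should have. On each $U_k$ you posit a homomorphism $\phi_k\colon(\mathbb{Q},+)\to\Ham_c(U_k)$ with the stated fixed-point behaviour, and observe, correctly, that even-denominator roots of $\phi_k(2^k)$ must move the bad fixed point. But producing such a $\phi_k$ — an infinite, pairwise-compatible tower of roots $a_{m+1}^{m+1}=a_m$ inside $\Ham_c(U_k)$ with prescribed negative-hyperbolic linearisations and $C^\infty$-control — is not a deferrable technicality: it is the original theorem again, now posed on each $U_k$, and with a strictly stronger demand (hyperbolic local models with controlled eigenvalues, rather than merely some non-autonomous behaviour). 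The paper's engine for producing compatible towers of roots is the Anosov--Katok method: $\Phi_n(\lambda)=H_nS_{\lambda\alpha_{n+1}}H_n^{-1}$ with $H_n=H_{n-1}\circ h_n$, where $h_n$ is a lift of a diffeomorphism of the quotient $U/S_{1/q_n}$, which guarantees the commutation needed for the roots to fit together and converge. It is not clear that this, or any, method can be bent to produce hyperbolic rather than elliptic local behaviour in the limit — the building blocks $S_t$ are rotations, conjugation cannot create hyperbolicity at a fixed stage, and it is far from obvious that a $C^\infty$-limit of such maps acquires an isolated fixed point with a negative-hyperbolic block. So as written your argument reduces the problem to an unproven construction that is at least as hard as the original, and a genuinely new mechanism for building the $\phi_k$ — not merely a balancing of the $\lambda_k$'s and the $U_k$-diameters that you flag as the ``technical heart'' — would be required.
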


Since $\Phi(1) \in \Ham(M,\omega)$ is not autonomous but has roots of all orders, the following corollary is immediate.

\begin{coro}\label{coro: answer main question}
For any symplectic manifold $(M,\omega)$, there exists a Hamiltonian diffeomorphism $\varphi$ that has roots of all orders but is not autonomous.\qed
\end{coro}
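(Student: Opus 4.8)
The Corollary is a formal consequence of Theorem~\ref{thm: Q to Ham}; the plan is to deduce it in one line and then to indicate how Theorem~\ref{thm: Q to Ham} itself would be proved, since that is where all the content sits. Granting the theorem, fix the homomorphism $\Phi\colon\mathbb{Q}\to\Ham(M,\omega)$ with $\Phi(\mathbb{Q})\cap\Auto(M,\omega)=\{Id\}$, and pick $q_0\in\mathbb{Q}$ with $\Phi(q_0)\neq Id$ — such $q_0$ exists because the construction behind the theorem produces a \emph{nontrivial} $\Phi$ (without this the assertion would be empty, as $Id$ has roots of all orders and is autonomous). Set $\varphi:=\Phi(q_0)$. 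For every $k\in\mathbb{N}$ the homomorphism property gives $\Phi(q_0/k)^k=\Phi(q_0)=\varphi$, so $\varphi$ has a $k$-th root. Since $\varphi\in\Phi(\mathbb{Q})\setminus\{Id\}$ we get $\varphi\notin\Auto(M,\omega)$, i.e. $\varphi$ is not autonomous. This proves the Corollary.

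So the work is in Theorem~\ref{thm: Q to Ham}. I would first reduce to a local model: it suffices to build $\Phi$ with image in the subgroup of $\Ham(M,\omega)$ supported in a single Darboux ball $B\cong B^{2n}(r)\subset(\mathbb{R}^{2n},\omega_{std})$, which makes the statement manifold-independent. Next, write $\mathbb{Q}=\varinjlim\bigl(\mathbb{Z}\xrightarrow{\times 2}\mathbb{Z}\xrightarrow{\times 3}\mathbb{Z}\xrightarrow{\times 4}\cdots\bigr)$, so that giving $\Phi$ is the same as producing a coherent tower $g_n:=\Phi(1/n!)$ of Hamiltonian diffeomorphisms of $B$ satisfying the \emph{exact} relations $g_{n+1}^{\,n+1}=g_n$ (together with $\Phi(k/n!)=g_n^{\,k}$). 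The point to establish is then that every nontrivial power $g_n^{\,m}$ fails to be autonomous. The natural mechanism is to exhibit an invariant $I$ of Hamiltonian diffeomorphisms which (a) vanishes on every autonomous one and (b) is a nonzero homomorphism when restricted to the abelian group $\Phi(\mathbb{Q})$: indeed, a homogeneous quasimorphism $\mu$ restricts to a homomorphism on any abelian subgroup, so if $\mu|_{\Auto}\equiv 0$ and $\mu(\Phi(q_0))\neq 0$ for some $q_0$, then $\mu\circ\Phi\colon\mathbb{Q}\to\mathbb{R}$ is a nonzero $\mathbb{Q}$-linear map and hence $\mu(\Phi(q))\neq 0$, so $\Phi(q)\notin\Auto$, for every $q\neq 0$. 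Because global quasimorphisms of Calabi type do not vanish on $\Auto$ on closed manifolds, I expect one has to use a \emph{local} invariant — built from the dynamics, or from local Floer/persistence data, near a fixed point or a small invariant set common to all the $g_n$ — that is tailored to die on autonomous germs.

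The main obstacle is the tension between the two requirements on $\varphi$. On one hand we need roots of all orders, which forbids the cheap obstructions to being autonomous: e.g. arranging a fixed point $p$ with $d\varphi_p\notin\exp(\mathfrak{sp}(2n,\mathbb{R}))$ does prove $\varphi$ non-autonomous, but such a germ typically has no square root, and in any case $d(\varphi^2)_p=(d\varphi_p)^2$ often lands back in $\exp(\mathfrak{sp})$, so the obstruction does not survive iteration. On the other hand we need the obstruction to persist under \emph{both} taking powers and extracting roots and to be verifiably $0$ on $\Auto$, uniformly in dimension and over all $(M,\omega)$. Thus the hard part is, first, to design the invariant $I$ (or quasimorphism $\mu$) with these divisibility-friendly and autonomous-killing properties, and second, to realize the coherent tower $(g_n)$ as honest compactly supported Hamiltonian diffeomorphisms of $B$ with the exact relations $g_{n+1}^{\,n+1}=g_n$ while keeping $I(g_n^{\,m})\neq 0$ — a gluing/normal-form problem rather than a mere up-to-isotopy statement. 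Everything else — the deduction of the Corollary, the direct-limit bookkeeping, the passage to a Darboux chart — is routine.
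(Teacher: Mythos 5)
Your deduction of the corollary from Theorem~\ref{thm: Q to Ham} is correct and essentially identical to the paper's: the paper takes $\varphi=\Phi(1)$, observes $\Phi(1/k)^k=\Phi(1)=\varphi$, and uses $\Phi(\mathbb{Q})\cap\Auto(M,\omega)=\{Id\}$ to conclude $\varphi\notin\Auto(M,\omega)$. Your point that one must know $\Phi$ is nontrivial (so that $\varphi\neq Id$) is apt but is supplied automatically by Proposition~\ref{prop: non-aut with roots}: there $\Phi(\ell)$ has an orbit dense in a nonempty open set for every $\ell\geq1$, which forces $\Phi(1)\neq Id$.

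Where you diverge from the paper is in the sketch of how Theorem~\ref{thm: Q to Ham} itself is established, and there the two routes are genuinely different. You propose to detect non-autonomy via an invariant or homogeneous quasimorphism $\mu$ vanishing on $\Auto(M,\omega)$ and nonvanishing on $\Phi(\mathbb{Q})$, and to build the coherent tower $g_{n+1}^{\,n+1}=g_n$ by a gluing/normal-form argument; you yourself flag the main obstruction (no known local, divisibility-friendly, $\Auto$-killing invariant). The paper sidesteps all of this. Its obstruction to autonomy is elementary and dynamical: an autonomous $\varphi=\varphi_H^1$ preserves the level sets of $H$, so if $\varphi$ has an orbit dense in a nonempty open set $U$ then $H|_U$ is constant, whence $\varphi|_U=Id$, contradicting the dense orbit. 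It then produces $\Phi$ by the Anosov--Katok method so that every $\Phi(\ell)$, $\ell\geq1$, has such a dense orbit. And the exact tower relations never need to be arranged by hand: each approximant $\Phi_n(\lambda)=H_nS_{\lambda\alpha_{n+1}}H_n^{-1}$ is itself a one-parameter subgroup (a conjugated reparametrized circle action), so the group relations hold identically at every finite stage, and the divisibility constraint on the numerators of $\alpha_n$ ensures the pointwise $C^\infty$-limits on $\mathbb{Q}$ assemble into a homomorphism. So your corollary deduction is sound, but the envisioned proof of the theorem would need a new mechanism, whereas the paper's dense-orbit argument is both simpler and already complete.
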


We will see that the image of the constructed group homomorphisms $\Phi$ lies in the $C^\infty$-closure of the set of autonomous diffeomorphisms. Naturally, we ask the following question.

\begin{ques}
Does there exist a Hamiltonian diffeomorphism that has roots of all orders but does not lie in the $C^\infty$-closure of the set of autonomous diffeomorphisms?
\end{ques}

As we will show shortly, the proof of Theorem \ref{thm: Q to Ham} is a consequence of the following proposition.

\begin{prop}\label{prop: non-aut with roots}
For any symplectic manifold $(M,\omega)$ there exists a group homomorphism
\[\Phi\colon \mathbb Q \to  \Ham(M,\omega)\]
and an open set $\mathcal{U}\subset M$ such that for all $\ell \in \Z_{\geq 1}$ there exists a point $x \in M$ with a positive orbit
\[\mathcal{O}(\Phi(\ell),x):=\{x, \Phi(\ell)(x), \Phi(2\ell)(x),\ldots\}\]
that is dense in $\mathcal{U}$.
\end{prop}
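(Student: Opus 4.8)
The plan is to build $\Phi$ locally, inside a Darboux chart, by an Anosov--Katok fast periodic approximation by conjugation, arranged so that an entire tower of roots of $\Phi(1)$ is produced at once and so that every positive integer power of $\Phi(1)$ is topologically transitive on a fixed open set.

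\textbf{Localization.} Fix a Darboux ball $B\subset M$ and, inside it, an open annular shell $\mathcal U$ carrying a free Hamiltonian circle action $\{R_\theta\}_{\theta\in\R/\Z}$ --- for instance $z\mapsto e^{2\pi i\theta}z$ on $\{a<|z|^2<b\}\subset\C^n\cong\R^{2n}$ --- which, via a cut-off of the quadratic Hamiltonian that is locally constant near $\partial B$, extends to elements $R_\theta\in\Ham(M)$ that are the identity outside $B$ and rigid rotations on $\mathcal U$. All maps below will be supported in $\overline B$ and will preserve $\mathcal U$. We build rationals $\alpha_m=p_m/q_m$ converging to an irrational $\alpha_\infty$ and conjugating symplectomorphisms $g_m$ supported in $\overline{\mathcal U}$; set $h_m:=g_1\circ\cdots\circ g_m$, $T_m:=h_m\circ R_{\alpha_m}\circ h_m^{-1}$, and $T:=\lim_m T_m$, the intended value $\Phi(1)$.

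\textbf{The tower of roots.} For $n\ge1$ put $\psi_m^{(n)}:=h_m\circ R_{\alpha_m/n!}\circ h_m^{-1}$. If $g_{m+1}$ commutes with $R_{\alpha_{m+1}/n!}$ then $\psi_{m+1}^{(n)}=h_m\circ R_{\alpha_{m+1}/n!}\circ h_m^{-1}$, so $d_{C^\infty}(\psi_m^{(n)},\psi_{m+1}^{(n)})\le C(h_m)\,|\alpha_m-\alpha_{m+1}|$, which we make summable by choosing $\alpha_{m+1}$ close enough to $\alpha_m$ only \emph{after} $g_{m+1}$, and hence $h_{m+1}$ and the constant $C(h_{m+1})$, has been fixed. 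To obtain this for all $n\le m+1$ at once it suffices to require that $g_{m+1}$ be equivariant for the rotation of order $q_{m+1}\cdot(m+1)!$ --- a refinement of the usual condition that it commute with $R_{1/q_{m+1}}$ --- since the cyclic group generated by that rotation contains every $R_{\alpha_{m+1}/n!}$ with $n\le m+1$. Granting the construction, set $\Phi(1/n!):=\lim_m\psi_m^{(n)}$. Because rotations commute, $(\Phi(1/(n+1)!))^{n+1}=\Phi(1/n!)$ and $(\Phi(1/n!))^{n!}=T$, so $p/n!\mapsto(\Phi(1/n!))^{p}$ is a well-defined homomorphism $\Phi\colon\mathbb Q\to\Ham(M)$ with $\Phi(1)=T$. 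Moreover each $\Phi(q)=\lim_m h_m\circ R_{q\alpha_m}\circ h_m^{-1}$ is a $C^\infty$-limit of conjugates of rotations, hence of elements of $\Auto(M,\omega)$; this yields the remark following Theorem~\ref{thm: Q to Ham}.

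\textbf{Transitivity of all powers, and the main obstacle.} Fix a countable basis $\{B_i\}$ of $\mathcal U$. In the inductive step we also exploit the remaining freedom in $g_{m+1}$ --- the equivariance constraint still leaves a large supply of \emph{stretching} symplectomorphisms of a fundamental domain of the order $q_{m+1}(m+1)!$ rotation --- together with the final choice of $\alpha_{m+1}$, to arrange, for every $\ell\le m+1$ and all $i,j\le m+1$, that $T_{m+1}^{\ell N}(B_i)\cap B_j\ne\emptyset$ for some $N=N_{m+1}$: a sufficiently violent conjugating stretch makes $h_{m+1}(B_i)$ almost fill $\mathcal U$, and a small residual rotation then carries it across $h_{m+1}^{-1}(B_j)$. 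This is the standard Anosov--Katok mechanism for topological transitivity, now imposed on the finitely many powers $\ell\le m+1$ simultaneously; since it is a $C^\infty$-open condition, making $\sum_m d_{C^\infty}(T_m,T_{m+1})$ small enough guarantees it persists in the limit, so $\Phi(\ell)=T^\ell$ is topologically transitive on $\mathcal U$ for every $\ell\ge1$. As $\mathcal U$ is a perfect Polish space, the Birkhoff transitivity theorem gives a residual set of $x\in\mathcal U$ with $\{T^{\ell n}x:n\ge0\}$ dense in $\mathcal U$; since $\Phi(k\ell)=\Phi(\ell)^k$, this set is precisely $\mathcal O(\Phi(\ell),x)$, proving the Proposition. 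The delicate point --- and the technical core of the argument --- is to run one induction that simultaneously imposes the fine equivariance needed for the whole root tower (which shrinks the domains the conjugators may stir) and still performs stretches strong enough to make \emph{all} powers $T^\ell$ transitive, while keeping the convergence estimates for the countably many sequences $(\psi_m^{(n)})_m$ and for $(T_m)_m$ mutually consistent; this is possible because at stage $m$ only finitely many $n$ and $\ell$ are constrained, leaving room at every later stage.
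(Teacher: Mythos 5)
Your proposal follows the same overall strategy as the paper---an Anosov--Katok conjugation scheme run inside a Darboux ball, with the family of roots produced by conjugating fractional rotations by the same sequence of conjugators---but the way you enforce the commutation needed for convergence, and the way you force dense orbits, differ enough to be worth comparing.

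\textbf{Enforcing the commutation constraint.} You require $g_{m+1}$ to be equivariant for the finite rotation of order $q_{m+1}(m+1)!$, whereas the paper instead requires the numerator $p_n$ of $\alpha_n$ to be divisible by $n!$ and only asks $h_n$ to commute with $S_{1/q_n}$. Both devices guarantee that the conjugators commute with $S_{\alpha_n/k}$ for $k\le n$, and the two are essentially interchangeable: yours shifts the constraint from the arithmetic of $\alpha_n$ onto the order of equivariance of $g_{n}$, while the paper keeps the equivariance order small and pays for it on the numerator (which then requires a small density lemma, their Lemma~\ref{lemma: density with constraint on the numerator}). The one thing you should be more careful about is the order of choices: you write that $g_{m+1}$ is to be equivariant for a rotation of order $q_{m+1}(m+1)!$ yet also that $\alpha_{m+1}$ is chosen ``only after $g_{m+1}$ has been fixed,'' which is circular as stated. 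The standard fix (and the one implicit in the paper) is to fix the denominator $q_{m+1}$ first, then $g_{m+1}$, and only then the numerator $p_{m+1}$; the remaining freedom in the numerator is what is used to make $|\alpha_{m+1}-\alpha_m|$ as small as the already-fixed constant $C(h_m)$ demands. (You also cite $C(h_{m+1})$ where $C(h_m)$ is the constant that actually controls the $m\to m+1$ step.)

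\textbf{Dense orbits.} You impose, at stage $m+1$, the Birkhoff-type open conditions $T_{m+1}^{\ell N}(B_i)\cap B_j\ne\emptyset$ for all $\ell,i,j\le m+1$, and argue that $C^\infty$-openness plus fast convergence preserves them in the limit. The paper instead tracks $\varepsilon_n$-dense orbits directly, following \cite{LRS22}, and uses a small trick you did not: it only arranges a dense orbit for the single map $\Phi_n(n!)$, and then notes that $\mathcal O(\Phi_n(n!),x)\subset\mathcal O(\Phi_n(\ell),x)$ for every $\ell\le n$, since $\Phi_n(n!)=\Phi_n(\ell)^{n!/\ell}$. This makes the inductive step lighter---only one power per stage needs to be ``stirred''---and avoids having to juggle $\ell$-many stretching conditions simultaneously. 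Your approach works, but at the cost of a more entangled induction, and the sketch of the stretching mechanism (``$h_{m+1}(B_i)$ almost fills $\mathcal U$, and a small residual rotation then carries it across $h_{m+1}^{-1}(B_j)$'') is looser than needed: what must be made $\eta$-dense under the conjugator is a discrete orbit of a rational rotation, not a ball, and the openness persistence needs a quantitative bound involving $N_{m+1}$ and the $C^1$-norms, since you are iterating $T$ up to $\ell N_{m+1}$ times. These are repairable with the standard estimates, but they are the part of your argument that is least worked out. Overall the proposal is a correct alternative implementation of the same construction, with these indexing and bookkeeping issues to clean up.
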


This proposition implies Theorem 1, as we show now. The existence of orbits of $\Phi(\ell)$ that are dense in an open set implies that they are not autonomous. 

\begin{proof}[Proof of Theorem \ref{thm: Q to Ham}] Suppose that there exists an autonomous Hamiltonian diffeomorphism $\varphi\in \Auto(M,\omega)$, generated by $H$ such that there exists a point $x$ of $M$ with an orbit $\mathcal{O}(\varphi,x)$ that is dense in an open set $\mathcal{U}\subset M$. Since autonomous Hamiltonians are preserved by the action of their flow, we have that $H$ is constant on $\mathcal O(\varphi,x)$. However, since the orbit is dense in $U$, the Hamiltonian $H|_U$ restricted to $U$ must be constant. Consequently, $\varphi$ acts as the identity on $U$, and the orbit $\mathcal O(\varphi,x)$ consists of only one point. This contradicts the density of the orbit in the open set $U$.

After applying this argument to $\Phi(\ell)$ for any $\ell\in \mathbb Z_{\geq 1}$ we can conclude that \[\Phi(\mathbb Q)\cap \Auto(M,\omega)=\{Id\}\] and therefore finish the proof of Theorem \ref{thm: Q to Ham} assuming Proposition \ref{prop: non-aut with roots}.
\end{proof}
As we shall see, the arguments we will use are not specific for the symplectic setting but can be applied to any manifold. Naturally, we present some more general applications of our argument to the set of diffeomorphisms of a compact manifold. To study this group, we will equip it with the $C^0$-topology, which is induced by the $C^0$-metric. This metric is defined on the space of $\Homeo(M)$ by first choosing an auxiliary Riemannian metric $g$ and then defining $$d_{C^0}(f,g) = \max_{x\in M}d_g(f(x),g(x)).$$
Similar to before, we show the existence of exotic group homomorphisms from the set of rationals to diffeomorphism groups of manifolds.
\begin{thm}\label{thm: non-continuous}
      For any compact manifold $M$, there exists a group homomorphism $$\Phi:\mathbb Q \to \Diff_c^\infty(M)$$ that is not $C^0$-continuous. 
\end{thm} 
The following two propositions imply this theorem. Firstly, for manifolds of dimension at least $2$, we can make a stronger statement as above. \begin{prop}\label{prop:exoticrationals}
    For any manifold $M$ of dimension at least $2$, there exist injective non-continuous group homomorphisms $$\Phi:\mathbb Q \to \Diff^\infty_c(M)$$ such that the image of $\Phi$ has no isolated points.
\end{prop} 
We will prove this result by applying the Anasov-Katok construction similarly to how we will do for the proof of Proposition \ref{prop: non-aut with roots}. However, the argument in the proof fails for dimension one due to the lack of $k$-transitive diffeomorphisms (that is, diffeomorphisms that send a tuple of $k$ isolated points to an arbitrary tuple of $k$ isolated points).

\begin{rk}
For manifolds $M$ that admit a smooth free $S^1$-action, a group homomorphism $S:S^1\to \Diff(M)$ that is continuous with respect to the $C^0$-topology, one can construct non-continuous group homomorphisms $\mathbb Q\to \Diff(M)$ more directly using that the identity has many $p$\textsuperscript{th} roots for any prime $p$. This approach has been inspired by discussion with Ibrahim Trifa.
\end{rk}

\begin{rk}
    The group homomorphism of Theorem \ref{thm: Q to Ham} can be assumed to be non-continuous. On the other hand, the map $\Phi(1)\in \Diff(M)$ from Theorem \ref{thm: non-continuous} can be assumed to be a non-autonomous Hamiltonian diffeomorphism. 
\end{rk}

\subsection{Organization of the paper}

The paper is organised as follows. In Section \ref{sec: proof of prop non-aut with roots}, we recall the Anosov-Katok construction and we prove Proposition \ref{prop: non-aut with roots} by adapting the construction to our setting. In Section \ref{sec: proof of non continuity}, we expand the proof of Proposition \ref{prop: non-aut with roots} in order to prove Theorem \ref{thm: non-continuous}. In Section \ref{sec: gamma topology}, we study the continuity of the map defined in Theorem \ref{thm: Q to Ham} for two other natural topologies on the group of Hamiltonian diffeomorphisms, namely the $\gamma$- and Hofer-topologies.

\subsection{Acknowledgments} 
The authors of this paper would like to thank P.~Feller, F.~Le~Roux, L.~Polterovich, V.~Stojisavljević and I.~Trifa for their interest and comments about this work. The authors would like to thank especially E.~Çinelli and S.~Seyfaddini for comments and insightful discussion throughout the project. The second author also would like to thank M.~Burger for a discussion that led him to better understand the mathematical context of this article, as well as A.~Ulliana and H.~Wu for organising a seminar that contributed to the birth of this article. During this project, B.S. was supported by ERC Starting Grant 851701.

\section{Proof of Proposition \ref{prop: non-aut with roots}}\label{sec: proof of prop non-aut with roots}

This section is dedicated to the proof of Proposition \ref{prop: non-aut with roots}. \emph{From now on, we will always assume that $M$ is a compact manifold, unless otherwise specified.} We recall first some basic definitions and notations of symplectic geometry. We then describe our setup for the Anosov-Katok construction that differs from the usual one. We finish the proof of Proposition \ref{prop: non-aut with roots} in Section \ref{subsec: Scheme} and \ref{subsec: Achieving dense orbits}.

\subsection{Preliminaries of symplectic geometry}
A symplectic manifold $M$ is a manifold endowed with a symplectic form $\omega$, i.e. a closed non-degenerate 2-form. For any symplectic manifold $(M,\omega)$, we denote by $\Symp(M,\omega)$ the group of \textit{symplectomorphisms} of $M$. Let $H_t \colon M \to \R$ be a time-dependent compactly supported Hamiltonian; one defines the \textit{Hamiltonian vector field $X_H$ associated with} $H$ to be the unique vector field that verifies the following formula
\[ \iota_{X_H}\omega=-\mathrm{d}H.\]
The flow of the vector field $X_H$ is called the \textit{Hamiltonian flow} of $H$ and is denoted $\varphi^t_H$. The time-1 maps of Hamiltonian flows are called \textit{Hamiltonian diffeomorphisms}, and altogether, they form a subgroup of $\Symp(M,\omega)$ denoted by $\Ham(M,\omega)$.

\subsection{Setup of the Anosov-Katok construction}
The proof consists in constructing a Hamiltonian diffeomorphism with the Anosov-Katok construction, first introduced in \cite{AK70}. Their construction uses a Hamiltonian $S^1$-action on a symplectic manifold. We generalise the construction to perform it on symplectic manifolds $(M,\omega)$ with a smooth Hamiltonian action $S:\mathbb R \to (\Ham(M,\omega),\tau_{C^\infty})$ such that there exists an invariant connected open subset $U\subset M$ such that the closure $\overline U$ is compact and upon which the restriction of the Hamiltonian action $S(\cdot)|_U$ is 1-periodic and induces a free circle action without fixed points denoted by
\[S|_U:S^1\to \Symp(U,\omega).\]
Here, $\tau_{C^\infty}$ denotes the Whitney topology (since we assume that $M$ is compact, the weak and strong Whitney topologies agree, and we can say Whitney topology). This topology admits a metric that turns $(\Diff(M),\tau_{C^\infty})$ into a complete metric space.
From the data of $S$, we will construct a sequence of group homomorphisms $(\Phi_n:\mathbb R\to \Ham(M,\omega))_n$ such that the following hold. \begin{itemize}
    \item[(i)] The group homomorphisms $\Phi_n$ are autonomous flows.
    \item[(ii)] For any $ q\in \mathbb Q$ the sequence $(\Phi_n(q))_n$ converges in the $C^\infty$-topology. The limits of all rational points fit together to form a group homomorphism 
    $$\Phi:\mathbb Q\to \Ham(M,\omega).$$
    \item[(iii)] For every $\ell\in \mathbb Z_{\geq 1}$, the diffeomorphisms $\Phi(\ell)$ have a dense orbit in $U$.
\end{itemize}
The existence of such a group homomorphism $\Phi:\mathbb Q\to \Ham(M,\omega)$ proves Proposition \ref{prop: non-aut with roots}. In the subsequent two sections, we first show how the sequence $(\Phi_n)_n$ that satisfies (i) and (ii) is constructed and then argue that we can improve the construction, additionally achieving (iii).

Let us assume for now that the above construction can be carried out, then once a suitable group action $S\colon \R \to \Ham(M,\omega)$ exists, we have proven Proposition \ref{prop: non-aut with roots}. We now show how to construct a suitable action $S$ on every symplectic manifold.

Our construction will take place in $\R^{2n}$ endowed with the standard symplectic form $\omega_0$. Darboux's theorem allows us to embed this construction in any symplectic manifold. Let $u_0:\mathbb R_{\geq 0} \to [0,1]$ be a function given by 
$$u_0(x)= \begin{cases}
    1-x, & \text{for }x\in [0,1]\\
    0,& \text{otherwise. }
\end{cases}$$
We now locally smoothen $u_0$ in $[0,\varepsilon)$ and $(1-\varepsilon, 1+\varepsilon)$, two small neighbourhoods of $0$ and $1$ respectively. By doing this, we can obtain a smooth compactly supported function $u:\mathbb R_{\geq0}\to[0,1]$ that is locally constant at $0$ and coincides with $u_0$ on $(\varepsilon, 1-\varepsilon)$. We now define the radial Hamiltonian on $\mathbb R^{2n}$ by 
$$G(x) = u(|x|).$$
The flow generated by $G$ is a suitable Hamiltonian action $S:\mathbb R \to \Ham(\mathbb  R^{2n},\omega_0)$. That is, denoting $B_r$ for the open ball of radius $r$ centred at the origin, the open set $U = B_{1-\varepsilon}-\overline{B_{\varepsilon}}$ is invariant under $S$ and $S$ acts on $U$ as a free circle action.

\subsection{Scheme of the construction}\label{subsec: Scheme}
We construct the sequence of autonomous morphisms $\Phi_n$ with the Anosov-Katok method, following the scheme described in \cite{LRS22}. Our construction differs from theirs in that we use a diagonal argument to impose the convergence of a countable number of maps simultaneously. We denote by $d_{C^\infty}$ a distance on $\Symp(\text{supp}(S),\omega)$ that induces the Whitney $C^\infty$-topology and turns it into a complete metric space.

The backbone of this construction is the Hamiltonian action $S$ which restricts to a circle action on an invariant set $U$. Then the morphisms $\Phi_n$ are defined as conjugations of the action $S$ stretched by the factor $\alpha_n$. In other words, we have
$$\Phi_n(\lambda) = H_nS_{\lambda \alpha_{n+1}}H_n^{-1},$$
where $H_n \in \Ham(U,\omega)$ is a Hamiltonian diffeomorphism supported in $U$ and $\alpha_n\in \mathbb Q$. In addition to this, we also demand that when written in irreducible form $\alpha_n=\frac{p_n}{q_n}$ is such that $p_n$ is divisible by $n!$. In what follows, we will need fast convergence of the sequence of rational numbers $\alpha_n$, we describe briefly why the constraint on the numerator is harmless with respect to that.
\begin{lemma}\label{lemma: density with constraint on the numerator}
Let $n$ be any integer, then the set of fractions that can be written with a numerator that is divisible by $n!$ in their irreducible representation is dense in $\R$.
\end{lemma}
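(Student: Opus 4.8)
The plan is to show that the set $D_n := \{ p/q \in \mathbb{Q} : \gcd(p,q)=1,\ n! \mid p \}$ is dense in $\mathbb{R}$. First I would reduce to approximating a fixed target $t \in \mathbb{R}$ and a fixed error $\delta > 0$. The idea is to take a prime $q$ that is large (in particular $q > n!$ and $1/q < \delta$), and coprime to $n!$, and then choose an integer $p$ so that $p/q$ is close to $t$ and $n! \mid p$. Concretely, pick $p$ to be the nearest multiple of $n!$ to $tq$; then $|p/q - t| \le \dfrac{n!}{2q}$, which is $< \delta$ once $q$ is large enough.

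The key steps, in order: (1) Given $t$ and $\delta$, choose a prime $q$ with $q > \dfrac{n!}{2\delta}$; such a prime exists because primes are infinite, and being prime and larger than $n!$ it is automatically coprime to $n!$. (2) Let $p$ be an integer multiple of $n!$ with $|tq - p| \le n!/2$; this exists since consecutive multiples of $n!$ are spaced $n!$ apart. (3) Observe $\gcd(p,q) = 1$: indeed $q$ is prime, so the only way $\gcd(p,q) \neq 1$ is $q \mid p$, in which case $p/q$ is still an integer and we may instead replace $p/q$ by the integer $p/q$ written as $(p/q)\cdot n! / n!$, whose numerator $(p/q)\cdot n!$ is divisible by $n!$ while the denominator is $n! $; so $p/q \in D_n$ regardless. (4) Conclude $|p/q - t| = |tq - p|/q \le \dfrac{n!}{2q} < \delta$.

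The main (minor) obstacle is the bookkeeping around the irreducibility requirement: one wants $n! \mid p$ in the \emph{reduced} form of the fraction, so one must ensure the chosen $q$ does not share factors with $p$. Choosing $q$ prime and large sidesteps this cleanly, as in step (3) — either the fraction is already reduced with the desired property, or it collapses to an integer, for which the divisibility-by-$n!$ condition in reduced form is trivially satisfied by writing the integer as $(\text{that integer}\cdot n!)/n!$. Everything else is elementary, so no further subtleties are expected.
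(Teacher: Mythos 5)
Your approach matches the paper's: pick a large prime $q$ coprime to $n!$ and take the nearest multiple of $n!$ to $tq$ as numerator, so that multiples of $n!$ over $q$ form an $\varepsilon$-net. One small slip occurs in your step (3): when $q \mid p$, you argue $p/q \in D_n$ by rewriting the integer $m := p/q$ as $(m\cdot n!)/n!$, but that is not the irreducible representation of $m$ (which is $m/1$), so it does not verify the defining condition of your set $D_n$, which explicitly requires $\gcd$-coprimality. The conclusion is nonetheless correct and easily repaired: writing $p = n!\,k$, the relation $q \mid p$ together with $\gcd(q, n!) = 1$ forces $q \mid k$, so $m = n!\,(k/q)$ is itself a multiple of $n!$, hence $m/1 \in D_n$. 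The paper's proof silently glosses over this same degenerate case (it speaks of "irreducible fractions with denominator $q$" without addressing the multiples of $q$ among the numerators), so modulo that repair the two arguments are essentially identical.
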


\begin{proof}
Let $\varepsilon>0$ be a positive number, pick $q$ a prime number larger than $n!$ and $\frac{2n!}{\varepsilon}$. Then, the set of all irreducible fractions with denominator $q$ and numerator divisible by $n!$ forms is an $\varepsilon$-net inside $\mathbb R$. Since we can do this for all $\varepsilon>0$, the result follows.
\end{proof}

The construction will be inductive; we construct the Hamiltonian diffeomorphism $H_n$ such that $H_n:=H_{n-1} \circ h_n$ for some Hamiltonian diffeomorphism $h_n \in \Ham(U,\omega)$ that commutes with $S_{1/q_n}$. In this way for every $\lambda \in \mathbb Q$ with denominator in $\llbracket 1, n \rrbracket = [1,n]\cap \mathbb Z$, the following holds
\begin{align}\label{equ.commutingcondition}
\Phi_{n-1}(\lambda) = H_{n-1} S_{\lambda \alpha_{n}}H_{n-1}^{-1} = H_{n} S_{\lambda \alpha_{n}}H_{n}^{-1}.
\end{align}
The identity (\ref{equ.commutingcondition}) implies that if we choose $\alpha_{n+1}$ close enough to $\alpha_n$, we can conclude that for all $k \in \llbracket 1, n \rrbracket$, we have
\begin{align*}
    d_{C^\infty}\left(\Phi_n\left(1/k\right),\Phi_{n-1}\left(1/k\right)\right)&= d_{C^\infty}(H_n S_{\alpha_{n+1}/k}H_n^{-1}, H_{n-1} S_{\alpha_{n}/k}H_{n-1}^{-1})\\
    & = d_{C^\infty}(H_n S_{\alpha_{n+1}/k}H_n^{-1}, H_{n} S_{\alpha_{n}/k}H_{n}^{-1})\\ &\leq \dfrac{1}{2^n}. \numberthis \label{equ.convergenceofmaps}
\end{align*}
Note that from the condition imposed on the numerator of $\alpha_n$ it follows that for all $\lambda \in \mathbb Q$ the equation (\ref{equ.convergenceofmaps}) will hold eventually for large enough $n$. This means that for every $\ell\in \mathbb Z_{\geq 1}$, the sequence of maps $(\Phi_n(\frac{1}{\ell}))_n$ is a Cauchy sequence and thus converges in $C^\infty$-topology to a diffeomorphism that we denote $\Phi(\frac{1}{\ell})$. Given the pointwise convergence of the sequence $(\Phi_n)_n$ on $\mathbb{Q}$ and the fact that each $\Phi_n$ is a group homomorphism, it follows directly that the sequence converges to a map $\Phi \colon \mathbb{Q} \to \Diff(M, \omega)$, which is also a group homomorphism.
Since the $C^k$-flux conjecture holds for $1\leq k\leq \infty$ \cite{On06}, we can conclude that the limit diffeomorphisms lie in $\Ham(M,\omega)$, that is, we have constructed
$$\Phi: \mathbb Q \to \Ham(M,\omega).$$

This construction is very general; now one can demand additional properties for the sequences $(\alpha_n)_n$ and $(H_n)_n$ to achieve specific properties in the limit $\Phi$. For example, as described in \cite{LRS22}, by demanding light conditions for $H_n$, the limit diffeomorphisms can be forced to have a dense orbit in $U$ and therefore are not autonomous.

Although the condition on $h_n$ might seem very strong, it can be reformulated as follows. For any $n\in \mathbb Z_{\geq 1}$, we can consider the quotient $(U/S_{1/q_n},\sigma)$ which is a smooth symplectic manifold with a symplectic structure $\sigma$ induced by $\omega$. Note that there is a smooth covering
$$\pi:U\to U/S_{1/q_n}.$$
Any Hamiltonian diffeomorphism $g_n\in \Ham(U/S_{1/q_n},\sigma)$ admits a lift $h_n\in \Ham(U,\omega)$ that commutes with $S_{1/q_n}$ and therefore (\ref{equ.commutingcondition}) is achieved.

\subsection{Achieving dense orbits}\label{subsec: Achieving dense orbits}
The proof that a dense orbit in the limit diffeomorphisms of the Anosov-Katok construction can be achieved as described in \cite{LRS22}. For the convenience of the reader and since we want the limit diffeomorphisms $\Phi(\ell)$ to have dense orbits for all $\ell\in \mathbb Z_{\geq 1}$, we describe how this construction can be achieved.

 Fix an auxiliary Riemannian metric on $M$ and denote the induced distance by $d$. A subset $F\subset U$ is called \emph{$\varepsilon$-dense} in $U$ if
$$\sup_{x\in U}\inf_{y\in F}d(x,y)<\varepsilon.$$
Now choose a fixed sequence $(\varepsilon_n)_n$ of real numbers converging to $0$. We will choose $g_{n}\in \Ham(U/S_{1/q_n},\sigma)$ so that $\Phi_n(n!)$ has an orbit $\mathcal{O}(\Phi_n(n!),x_n)$ that is $\varepsilon_n$-dense in $U$. Note that since for $\ell = 1,...,n$ we have
\[\mathcal{O}(\Phi_n(n!),x_n) \subset \mathcal{O}(\Phi_n(\ell),x_n),\]
and thus $\Phi_n(\ell)$ has an $\varepsilon_n$-dense orbit in $U$. Fix $\ell \in \mathbb Z_{\geq 1}$, for $n$ an integer large enough, the map $\Phi_n(\ell)$ has an $\varepsilon_n$-dense orbit, we obtain as in \cite[Section 2.3]{LRS22} that fast convergence implies that $\Phi(\ell)$ has an $\varepsilon$-dense orbit in $U$ for any $\varepsilon>0$. Here we use the compactness closure of $U$ to conclude that such orbits can be assumed to be finite. Again, as in \cite{LRS22}, this implies that $\Phi(\ell)$ has a dense orbit in $U$. This would imply Proposition \ref{prop: non-aut with roots}.

We still need to show that the sequence $(g_n)_n$ can be chosen such that $\Phi_n(n!)$ has an $\varepsilon_n$-dense orbit. We assume that we already constructed $\alpha_n$ and $H_{n-1}$ such that the map $\Phi_{n-1}((n-1)!)=H_{n-1} S_{\alpha_n(n-1)!}H_{n-1}^{-1}$ has an $\varepsilon_{n-1}$-dense orbit and construct $H_n$ and $\alpha_{n+1}$ such that $\Phi_n(n!)$ has an $\varepsilon_n$-dense orbit. Set
$$\pi_n:U\to U':= U/S_{1/q_n}$$ 
to be the standard projection. Choose a finite set of points $F_1\subset U'$ such that $\pi_n^{-1}(F_1)$ is $\eta_{n}$-dense in $U$ where $\eta_{n}$ is chosen such that $H_{n-1}$ maps any ball of diameter $\eta_{n}$ inside a ball of diameter $\varepsilon_{n}$. Note that this is possible since $U$ lies in the support of the flow $S$, which is compact. We choose a set $F_1\subset U'$ with the same cardinality as $F_1$, all lying in the projection $\pi(C)$ of a single orbit $C$ of the circle action on $U$. Since the group of Hamiltonian diffeomorphisms is $|F_1|$-transitive (Lemma \ref{lemma: n-trans of Ham}), we can pick $g_n\in \Ham(U')$ such that $g_n(F_1)=F_2$.

\begin{lemma}\label{lemma: n-trans of Ham}
    For any manifold $M$ and any $p$-tuples $(x_1,...,x_p)$ and $(y_1,...,y_p)$ where $x_i \neq x_j$ and $y_i\neq y_j$ for $i\neq j$ there exists a compactly supported diffeomorphism $\psi$ such that $\psi(x_i) = y_i$ for all $i$. If we assume the existence of a symplectic form on $M$, then $\psi$ can be assumed to be a Hamiltonian diffeomorphism. \qed
\end{lemma}

We now define $h_n$ as the lift of $g_n$ to $U$. The image $h_n(C)$ of the orbit $C$, is $\eta_n$-dense in $U$ and thus, $H_{n-1}(h_n(C))$ is $\varepsilon_n$-dense in $U$. We choose a rational number $\alpha_{n+1}=\frac{p_{n+1}}{q_{n+1}}$ with $p_{n+1}$ divisible by $(n+1)!$ such that there is a discrete orbit $C' \subset C$ under $S_{\alpha_{n+1}n!}$ that contains a subset close enough to $\pi^{-1}(F_1)$ such that $H_{n-1}(h_n(C'))$ is $\varepsilon_n$-dense in $U$, this is possible if $\alpha_{n+1}$ is close enough to $\alpha_n$. We define $H_n:=H_{n-1}\circ h_n$ and $\Phi_n=H_nS_{\alpha_{n+1}}H_n^{-1}$. The set $H_n(C')$ is then an $\varepsilon_n$-dense orbit of $\Phi_n(n!)$. This concludes the proof of Proposition~\ref{prop: non-aut with roots}.

\color{black}
\section{Proof of Theorem \ref{thm: non-continuous}}\label{sec: proof of non continuity}

As explained in the introduction, the proof of Theorem \ref{thm: non-continuous} follows from Proposition \ref{prop:exoticrationals}. This section is devoted to the proof of Proposition \ref{prop:exoticrationals}. The construction scheme is the same as in the previous section. In fact, the construction of $\Psi$ can be performed analogously without knowledge of a symplectic form, where diffeomorphisms replace Hamiltonian diffeomorphisms.

\subsection{Constructing exotic rationals in $\Diff(M)$}
Starting from a continuous group homomorphism $\mathbb R\to (\Diff(M),\tau_{C^\infty})$ we will build an exotic group homomorphism $\Phi:\mathbb Q\to \Diff(M)$ similarly to the previous section. However, we now demand the group homomorphism $\Phi$ to fulfil further conditions. 

\begin{prop}\label{prop: homomorphism to diff}
For all smooth manifold $M$ of dimension greater or equal than 2, there exists a group homomorphism $$\Phi:\mathbb Q\to \Diff(M)$$ satisfying the following properties:
 \begin{enumerate}[label=(\roman*)]
        \item There exists a real number $c>0$ such that for any prime $p$ large enough we have $d_{C^0}(id,\Phi(1/p))>c$.
        \item There exists a diverging sequence of natural number $(p_n)_n$ such that $\Phi(1/p_n)$ converges to the identity.
        \item For any $\ell$ there exists an $x\in M$ such that $\{\Phi(\ell n)x\}_{n\in \mathbb Z_{\geq1}}$ is dense in some open set $U\subset M$.
    \end{enumerate}
\end{prop}

Note that $(iii)$ automatically implies that $\Phi$ is injective. Otherwise, there would be a natural number $\ell$ such that $\Phi(\ell)= id$, and therefore there cannot be a dense orbit of $\Phi(\ell)$ in any open set. To obtain $\Phi$, we again apply the Anosov-Katok method to a group homomorphism $S:\mathbb R\to \Diff(M)$ but demand the sequences $(\alpha_n)_{n}$ and $(h_n)_n$ to satisfy additional conditions. Let us first assume that we have a continuous group homomorphism $S:\mathbb R \to \Diff(M)$ and an open set $U\subset M$ where $S$ is invariant and so that $S$ restricts $U$ to a group action $S|_U:\mathbb R \to \Diff(U)$ with a discrete kernel and therefore induces a free circle action by diffeomorphisms, that is, a continuous group homomorphism $S|_U:S^1\to \Diff(U)$ such that $S|_U(\xi)(x) =x$ for some $x\in U$ implies that $\xi =1$. We again choose a sequence $(h_n)_n$ of diffeomorphisms of $M$ compactly supported in $U$ and a sequence of rational numbers $(\alpha_n)_n$ and define 
$$\Phi_n:\mathbb Q\to \Diff(M),\quad \lambda \mapsto  H_n \circ S_{\lambda\alpha_{n+1}} \circ H_n^{-1}$$ 
where $H_n = h_n\circ...\circ h_1$. Like in the previous section, the sequence $(\Phi_n)_n$ will converge to a group homomorphism $\Phi$. For $\Phi$ to satisfy condition $(i)$ from Proposition \ref{prop: homomorphism to diff} we choose $(h_n)_n$ and $(\alpha_n)_n$ in such a way that there exists a real number $c>0$ such that 
\begin{equation}\label{equ: non-continuity}
d_{C^0} (Id,\Phi_{p-1}(1/p))> 2c.
\end{equation}
The sequence $(\Phi_n)_n$ of group homomorphism will again fulfill condition (\ref{equ.convergenceofmaps}) and, therefore we have that $$d_{C^0}(Id,\Phi(1/p)) \geq d_{C^0}(Id,\Phi_{p-1}(1/p))-d_{C^0}(\Phi_{p-1}(1/p),\Phi(1/p))> 2c - \sum_{k \geq p}2^{-k}.$$
Thus, for large enough primes $p$ we have that 
$$d_{C^0}(Id,\Phi(1/p)) > c.$$
To achieve $(ii)$ it is enough to refine the choice of $\alpha_n = {p_n\over q_n}$ such that $p_n|p_{n+1}$ and $q_n$ is chosen to be large enough. Choosing $(\alpha_n)_n$ correctly will imply that $\Phi_n(1/p_n)$ is bounded by an arbitrarily small number.
As will become apparent later, all this can be done simultaneously while also ensuring $\Phi(\ell)$ has a dense orbit in an open set such that $\Phi$ also satisfies $(iii)$.\\
Let us first concentrate on $(i)$. For this, we choose $(h_n)$ in a particular way where $h_n$ is supported on an open set $U_n$ where $$U_1\subset U_2 \subset U_3 \subset ... \subset U$$ is an increasing sequence of open sets. This gives us control of the behaviour of $H_n = H_{n-1}\circ h_n$ on $U_n-U_{n-1}$ since $H_{n-1}$ is supported on $U_{n-1}$.
More concretely, to prove (\ref{equ: non-continuity}), we will use the following lemma.
\begin{lemma}\label{lemma: sequenceofopensets}
For any connected open set $U\subset M$ and free circle action $S$ on $U$ there exists a sequence of connected open sets $$U_1\subset U_2 \subset U_3 \subset ... \subset U$$ and a positive number $c>0$ such that the following is fulfilled. \begin{enumerate}[label=(\roman*)]
    \item $\bigcup_n U_n$ is dense in $U$.
    \item For all $n$ large enough, there exists $x_{n+1},y_{n+1}$ in the interior of $ U_{n+1}-U_n$ such that $d(x_{n+1},y_{n+1})>c$.
    \item The sets $U_n$ are invariant under the circle action $S$.
\end{enumerate}
\end{lemma}
\begin{proof}
Let us first consider the case when dim$(M) = 2$. Then, since we assumed the circle action to be free, the open set $U$ is diffeomorphic to a cylinder with two boundary components $A,B$. Consider $$U_n = \left\{x\in U| \inf_{t\in S^1}d(S_tx,\partial U)> {1\over n}\right\}$$ which is an exhausting sequence of $U$ by open connected subsets which are invariant under the circle action. Now let $d(A,B)= 2c>0$ and note that for $n$ large enough, $U_{n+1}-U_n$ has two connected components, each close to a boundary component $A$ or $B$. Picking any $x_{n+1},y_{n+1}$ in distinct connected components of $U_{n+1}-U_n$ will ensure that (ii) holds. 

If the dimension of $M$ is $n>2$, we choose $x_0, y_0 \in U$ with distinct continuous orbits $C_{x_0},C_{y_0}$ under $S$. Let $X$ denote the vector field that induces the circle action. That is $$X = {d\over dt}|_{t=0}S_t.$$ For any $x\in U$ and $\varepsilon>0$ choose a smoothly embedded $(n-1)$-dimensional open disk $D_{x,\varepsilon}$ centred at $x$ that is transverse to $X$ and contained in a $\varepsilon$-ball centred at $x$. For $\varepsilon$ small enough, note that for any $z\in D_{x,\varepsilon}$ we have $\{S_tz\}_{t\in S^1}\cap D_{z,\varepsilon} = \{z\}$. In this case, the set $N_{x,\varepsilon} = \bigcup_{t\in S^1}S_tD_{x,\varepsilon}$ is an $S^1$-invariant neighbourhood of the orbit of $x$. Moreover, for any $\delta>0$ there exists a $\varepsilon>0$ such that $N_{x,\varepsilon}$ is contained in a $\delta$-neighbourhood of the orbit of $x$. Moreover, for $\varepsilon$ small enough, note that $N_{x_0,\varepsilon}$ and $N_{y_0,\varepsilon}$ are disjoint tubular neighbourhoods of $C_{x_0}$ and $C_{y_0}$. Now consider $$U_n = U-(N_{x_0,\varepsilon/n}\cup N_{y_0,\varepsilon/n})$$ and note that these sets form an exhausting sequence $U-(C_{x_0}\cup C_{y_0})$ of open connected sets invariant under the circle action. Since $U-(C_{x_0}\cup C_{y_0})$ is dense in $U$ the sequence fulfils (i). For similar reasons as in the case of dimension 2
they also fulfil (ii) if we define $d(C_{x_0},C_{y_0}) = 2c >0$. \end{proof} 

\color{black}

We now show how to construct $(h_n)_n$ so that $(\Phi_n)_n$ satisfies (\ref{equ: non-continuity}). Namely, we first proceed in the same way as in Section \ref{subsec: Scheme}; however, we require that $h_n$ is supported in the open set $U_n$. Recall that we achieve this by choosing diffeomorphisms of the quotients $U_n /S_{1 / q_n}$ and lifting them to compactly supported diffeomorphisms of $U$ that send a discrete finite orbit $C$ to an $\eta_n$-dense set. Without loss of generality, we may assume that the points $x_n$ and $y_n$ given by Lemma \ref{lemma: sequenceofopensets} are not contained in $C$ or its image. In addition to what we demanded $h_n$ to fulfil up to now, we further ask that when $n+1$ is prime, $x_n$ be sent to $x_n$ and $S_{\alpha_{n}/n+1}x_n$ be sent to $y_n$. In general, this is not possible since $h_n$ is a lift of a diffeomorphism of the quotient $U_n /S_{1 / q_n}$ where we might identify $x_n$ with $S_{\alpha_n/n+1}$. This is the case if and only if $\alpha_n = {p_n\over q_n}$ is such that $n+1$ divides $p_n$. However, if $n+1$ is prime, we will show that this is not an issue in a moment. As we will see, the described choice of $(h_n)_n$ together with fast convergence of $(\alpha_n)_n$ implies that $\Phi$ satisfies conditions $(i)$ and $(iii)$ from Proposition \ref{prop: homomorphism to diff}.
Given the goal of also achieving $(ii)$ simultaneously, we will ask that $p_n$ and $q_n$ fulfil even further conditions. Namely, what we need for $(ii)$ is that $p_n$ divides $p_{n+1}$. We show that this and all of the above conditions can be achieved by a set of rational numbers that are dense in $\mathbb Q$, ensuring that the sequence $(\alpha_n)_n$ can converge arbitrarily fast. Let $\mathcal Z_n$ be the set of natural numbers that are given by a product of numbers $1,...,n$ 
$$\mathcal Z_n = \{2^{\ell_1}\cdot \ldots \cdot n^{\ell_{n-1}}|\ell_i \in \mathbb Z_{\geq 0}\}.$$
Note that by demanding that $p_n\in \mathcal Z_n$ we automatically have that $n+1$ does not divide $p_n$ if $n+1$ is prime.
\begin{lemma}
    Let $n\in \mathbb Z_{\geq 2}$ and $p_{n-1}\in \mathcal Z_{n-1}$. Then the set of rational numbers ${p_n\over q_n}$ in irreducible form, where $p_n$ belongs to $\mathcal{Z}_n$ is divisible by $n!$ and $p_{n-1}$, are dense in $\mathbb Q$. 
\end{lemma}
\begin{proof}
Let ${p\over q}$ be a rational number and let $\varepsilon>0$. For any choice $p_n$ we first set $\tilde q_n = \lfloor{p_nq\over p}\rfloor$, at the end of the proof we will let $p_n$ and thus $\tilde{q}_n$ run to infinity. We now use the following Sub-lemma.

\begin{sublemma}\label{sublemma: NT sublemma}
Let $r_1, \ldots, r_k$ be natural numbers. Then, there exists some natural number $P\coloneqq P(r_1, \ldots, r_k)$ such that for all $x \in \N$ there is a natural number in the interval $\llbracket x-P, x+P \rrbracket$ that is prime with all $r_i$'s.
\end{sublemma}

\begin{proof}
Let $P=r_1 \cdot\ldots \cdot r_k$, then in each interval of the form $\llbracket x-P, x+P \rrbracket$ there is at least one number $y$ that verifies that $y \equiv 1 \pmod{P}$. This implies that this number is not divisible by any of the $r_i$'s.
\end{proof}

From the Sub-lemma, we define $P \coloneqq P(n,p_{n-1})$ thus there exists $q_n\in \llbracket \tilde q_n-P,\tilde q_n +P\rrbracket$ that is not divisible by any prime smaller than $n$ or $p_{n-1}$. Thus, $p_n\over q_n$ in irreducible from is given by $\bar p_n\over \bar q_n$ where $p_{n-1}$ and $n!$ still divide $\bar p_n$. We then have that 
$$\left|{p_n\over q_n}-{p\over q}\right| = {|p_nq-pq_n|\over q_nq} \leq {|p_nq-p\tilde q_n|\over q_n q} + {Pp\over q_nq  } \leq {(P+1)p\over q_n q}.$$
Note that $p_n\in \mathcal Z_n$ satisfying $n!|p_n$ and $p_{n-1}|p_n$ can be chosen arbitrarily large. By choosing $p_n$ large enough, we achieve ${(P+1)p\over q_n q}< \varepsilon$ and thus 
$$\left|{p_n\over q_n}-{p\over q}\right|<\varepsilon.$$
\end{proof}
Thus, we can assume that the sequence $(\alpha_n)_n$ is chosen such that $\alpha_n = {p_n\over q_n}$ in its irreducible form is such that $n!$ and $p_{\ell}$ divides $p_n$ for any $\ell\leq n$ and additionally, if $n+1$ is prime, $n+1$ does not divide $p_n$.
Moreover, we can choose $\alpha_{n+1}$ arbitrarily close to $\alpha_n$ that satisfies all the required properties discussed in the previous sections and fulfils the following additional constraints. Firstly, we demand $\alpha_{n+1}$ to be so close to $\alpha_n$ such that $h_n\circ S_{\alpha_{n+1}/n+1}(x_n)$ lies in the interior of $U_n-U_{n-1}$.
Now consider the map $\Phi_n(1/ n+1) =  H_{n-1}h_n S_{\alpha_{n+1}/n+1}h_n^{-1} H_{n-1}^{-1}$ and note that 
\begin{align*}
    \Phi_n\left(1 \over n+1\right)(x_n) &= H_{n-1} h_n S_{\alpha_{n+1}/n+1}h_n^{-1} H_{n-1}^{-1}(x_n)
    \\ &= H_{n-1}h_n S_{\alpha_{n+1}/n+1}h_n^{-1}(x_n)\\&=  H_{n-1}(h_n S_{\alpha_{n+1}/n+1}(x_n))
    \\ &= h_n S_{\alpha_{n+1}/n+1}(x_n).
\end{align*}
The second and last equalities come from the fact that $x_n$ and $h_n S_{\alpha_{n+1}/n+1}(x_n)$ lie outside the support of $H_{n-1}$. We choose $\alpha_{n+1}$ possibly even closer to $\alpha_n$ such that $$d(y_n, h_n S_{\alpha_{n+1}/n+1}x_n) < d(x_n,y_n)-2c$$ and therefore ensuring that 
$$d(x_n,\Phi_n(1/ n+1)x_n)>2c.$$
Thus, we can conclude that 
$$d(Id,\Phi_n(1/n+1))>2c$$ 
and therefore $\Phi$ satisfies condition $(i)$. To see that condition $(ii)$ is fulfilled, first note that for any $k\leq n$ we have 
$$\Phi_k(1/p_n) = H_k S_{\ell_{k+1}/ q_{k+1}}H_k^{-1} = H_{k+1}S_{\ell_{k+1}/q_{k+1}}H_{k+1}^{-1} =H_{k+1}S_{\alpha_{k+1}/p_n}H_{k+1}^{-1},$$
where $\ell_{k+1} = p_{k+1}/p_n$. By fast convergence of the sequence $(\alpha_n)_n$ we can assume 
$$d_{C^\infty}(\Phi_k(1/p_n),\Phi_{k+1}(1/p_n)) = d_{C^\infty}(H_{k+1}S_{\alpha_{k+1}/p_n}H_{k+1}^{-1},H_{k+1}S_{\alpha_{k+2}/p_n}H_{k+1}^{-1})\leq 2^{-k-1}.$$
Therefore, we have that 
$$d_{C^\infty}(\Phi_n(1/p_n),\Phi(1/p_n))\leq \sum_{k> n}2^{-k}.$$ 
Since we can assume that $q_n$ is arbitrarily large we may assume that $$d_{C^\infty}(id, \Phi_n(1/p_n)) = d_{C^\infty}(id,H_nS_{1/q_n}H_n^{-1})\leq 2^{-n}.$$ Thus, we have that $$d_{C^\infty}(id,\Phi(1/p_n)) \leq \sum_{k\geq n} 2^{-k}$$ which proof that $\Phi$ satisfies condition $(ii)$. Note that $(iii)$ is fulfilled for the same reasons as in the previous section. 

To complete our argument, we now give an example of an action $S:\mathbb R\to M$ that restricts to a free $S^1$ action on an invariant subset. Consider any smoothly embedded thickened loop $\gamma:S^1\times D\to M$ where $D$ is the $(n-1)$-dimensional open disk. Consider the vector field $X$ on $\gamma(S^1\times D)$ that is given by $$X(t,x) = {d\over dt}\gamma(t,x).$$ Now extend $X$ to a smooth vector field on $M$ and let $S$ be the flow of $X$. On $\gamma(S^1\times D)$ the flow $S$ indices a free $S^1$ action.

\section{Relation with the $\gamma$-topology and Hofer-topology}\label{sec: gamma topology}

In the proof of Theorem \ref{thm: non-continuous}, we saw that the group homomorphism $\Phi \colon \mathbb Q \to \Ham(M,\omega)$ need not be $C^0$-continuous. In this section, we study the continuity properties with respect to other metrics on $\Ham(M,\omega)$. When the map $\Phi$ is continuous for some topology then it extends to a continuous group morphism from the real line to some completion of $\Ham(M,\omega)$. For the $C^0$-topology we proved earlier that this does not always happen. However, the answer to this question is different when we ask for the Hofer- or $\gamma$-topology on $\Ham(\R^{2n},\omega_0)$.
We recall some properties of the $\gamma$-norm and the Hofer norm. The \textit{Hofer norm} is defined in the following way, for all Hamiltonian diffeomorphisms $\varphi\in \Ham(M,\omega),$
\[d_H(Id,\varphi):=\inf\left\{\int_0^1 (\max_M H_t - \min_M H_t) \mathrm{d}t\mid H_t \text{ generates a path } \varphi_t, ~ \varphi_0=Id, ~\varphi_1=\varphi \right\}.\]
As the notation already indicates, the Hofer norm is induced by a metric $d_H$ called the \emph{Hofer-metric}. One can see that the Hofer metric is indeed a bi-invariant metric. The fact that it is non-degenerate is a deep result proven by Lalonde and McDuff in \cite{LM}.

The $\gamma$-norm was first defined by Viterbo in \cite{Vit92} using the theory of generating functions for Lagrangians in the cotangent bundle of a manifold, and it satisfies the following properties.

\begin{prop}[Viterbo]\label{prop: gamma norm}
There exists a map $\gamma \colon \Ham(\R^{2n},\omega_0) \to \R$ that verifies the following properties for all $\varphi, \psi \in \Ham(\R^{2n})$.
\begin{itemize}
    \item[(i)] $\gamma(\varphi \psi)\leq \gamma(\varphi)+\gamma(\psi)$.
    \item[(ii)] $\gamma(\varphi)\geq 0$ and $\gamma(\varphi)=0$ if and only if $\varphi=Id$.
    \item[(iii)] $\gamma(\psi \varphi \psi^{-1})=\gamma(\varphi)$.
    \item[(iv)] $\gamma(\varphi) \leq d_H(\varphi,Id)$.
    \item[(v)] If $H \leq K$ are two Hamiltonians, then $\gamma(\varphi^1_H) \leq \gamma(\varphi^1_K)$.
\end{itemize}
\end{prop}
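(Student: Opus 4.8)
The plan is to recall Viterbo's construction of $\gamma$ from generating functions quadratic at infinity (GFQI) and to verify (i)--(v) at the level of the resulting spectral numbers. First I would pass to a compact model: a compactly supported $\varphi\in\Ham(\R^{2n},\omega_0)$ has graph $\Gamma_\varphi\subset(\R^{2n}\times\R^{2n},(-\omega_0)\oplus\omega_0)$ which, via the standard identification of a neighbourhood of the diagonal $\Delta$ with $T^*\Delta\cong T^*\R^{2n}$ followed by one-point compactification, becomes a Lagrangian $L_\varphi\subset T^*S^{2n}$ that coincides with the zero section near infinity and is Hamiltonian isotopic to it. By the Sikorav--Laudenbach existence theorem, $L_\varphi$ admits a GFQI $S_\varphi\colon S^{2n}\times\R^N\to\R$, and for a cohomology class $u\in H^*(S^{2n})$ one defines the minimax value
\[
c(u,\varphi)=\inf\{a\in\R : u\ \text{lies in the image of}\ H^*(\{S_\varphi\le a\})\to H^*(S^{2n})\},
\]
where the target is $H^*$ of a sublevel set far below, identified with $H^*(S^{2n})$ via the Thom isomorphism. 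One then sets $c_+(\varphi)=c(\mu,\varphi)$ for the fundamental class $\mu$, $c_-(\varphi)=c(1,\varphi)$ for the unit class, and $\gamma(\varphi)=c_+(\varphi)-c_-(\varphi)$. The first thing to establish is that these numbers are independent of the chosen GFQI: this is the uniqueness theorem for GFQI up to fibrewise diffeomorphism, stabilisation, and addition of a constant, the last operation shifting $c_+$ and $c_-$ equally, hence leaving $\gamma$ unchanged.

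Granting well-definedness, (ii) follows from the minimax picture: the restriction $H^*(S^{2n})\to H^*(pt)$ sends $\mu$ to $1$, so $c_-\le c_+$ and $\gamma\ge 0$; moreover $\gamma(\varphi)=0$ forces, by a Lusternik--Schnirelmann argument applied to $S_\varphi$, that $S_\varphi$ has essentially a single critical value, whence $L_\varphi$ is the zero section and $\varphi=Id$. Property (iii) holds because conjugation by $\psi$ replaces $\Gamma_\varphi$ by its image under $\psi\times\psi$; after compactification this is a Hamiltonian isotopy of $T^*S^{2n}$ fixing the zero section near infinity, and Viterbo's invariance of the minimax values under such deformations of the generating Lagrangian gives $\gamma(\psi\varphi\psi^{-1})=\gamma(\varphi)$.

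For (i) I would use the composition formula: from GFQI $S_\varphi,S_\psi$ one builds an explicit GFQI for $L_{\varphi\psi}$, for which the inclusions $\{S_\varphi\le a\},\{S_\psi\le b\}\hookrightarrow\{S_\varphi\#S_\psi\le a+b\}$ together with multiplicativity of the cup product yield $c_+(\varphi\psi)\le c_+(\varphi)+c_+(\psi)$ and, dually, $c_-(\varphi\psi)\ge c_-(\varphi)+c_-(\psi)$; subtracting gives the triangle inequality. For (iv), when $\varphi=\varphi_H^1$ one takes a GFQI compatible with the isotopy $\varphi_H^t$ and bounds its critical values by the action functional, obtaining $c_+(\varphi_H^1)\le-\int_0^1\min_M H_t\,dt$ and $c_-(\varphi_H^1)\ge-\int_0^1\max_M H_t\,dt$ with the sign convention of this paper; since the left-hand sides depend only on the time-$1$ map, taking the infimum over all $H$ generating $\varphi$ gives $\gamma(\varphi)\le d_H(Id,\varphi)$. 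Property (v) is the monotonicity of the minimax with respect to the pointwise order: choosing generating functions adapted to $H\le K$ and comparing the corresponding sublevel sets yields the stated inequality for $\gamma$.

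The main obstacle --- and the only part I would treat in detail, as opposed to quoting it from \cite{Vit92} --- is the foundational package on generating functions: the existence of a GFQI for every Lagrangian Hamiltonian isotopic to the zero section and, above all, its \emph{uniqueness} up to stabilisation, fibrewise diffeomorphism and constants. This is exactly what makes $c_\pm$, and hence $\gamma$, invariants of $\varphi$ rather than of an auxiliary choice; once it is in place, the composition formula behind (i) and the action estimates behind (iv)--(v) are comparatively routine.
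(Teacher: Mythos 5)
The paper does not prove Proposition~\ref{prop: gamma norm} at all: it is quoted as a theorem of Viterbo and justified by the citation to \cite{Vit92}, so there is no in-paper proof to compare against. What you have done is reconstruct an outline of Viterbo's original generating-function argument — compactification of $\Gamma_\varphi$ into $T^*S^{2n}$, Sikorav existence of a GFQI, Viterbo/Th\'eret uniqueness up to stabilisation, fibrewise diffeomorphism and constants, the minimax spectral values $c_\pm$, the composition formula for (i), conjugation invariance for (iii), and the action bound for (iv). For (i)--(iv) this is exactly the route the cited reference takes, and your emphasis that the uniqueness theorem is the real foundational content is well placed.

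There is, however, a genuine gap in your treatment of (v), which also exposes that (v) as stated in the proposition is stronger than what Viterbo proves. Your argument (``monotonicity of the minimax with respect to pointwise order'') yields monotonicity of the individual spectral invariants: if $H\le K$ then $c_+(\varphi^1_H)\le c_+(\varphi^1_K)$ \emph{and} $c_-(\varphi^1_H)\le c_-(\varphi^1_K)$. Since $\gamma=c_+-c_-$, these two monotonicities push in opposite directions and monotonicity of $\gamma$ does \emph{not} follow. Concretely, if $K\ge 0$ is autonomous and nonzero and one sets $H=-2K$, then $H\le K$ while $\gamma(\varphi^1_H)=\gamma\bigl((\varphi^2_K)^{-1}\bigr)=\gamma(\varphi^2_K)\ge \gamma(\varphi^1_K)$, with strict inequality in general. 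What is true, and what the authors actually use in Proposition~\ref{prop: gamma continuity in general case} and Proposition~\ref{prop: the image is closed for gamma}, is the restricted statement: for a one-signed compactly supported Hamiltonian one of $c_\pm$ vanishes, so for $0\le H\le K$ the norm $\gamma(\varphi^1_H)$ reduces to a single spectral invariant, and monotonicity of $c_\pm$ then does give $\gamma(\varphi^1_H)\le\gamma(\varphi^1_K)$. You should either restrict (v) to that regime or replace it by monotonicity of $c_\pm$, which is the form in which Viterbo states it.
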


The normed space $(\Ham(\R^{2n},\omega_0),\gamma)$ is not complete, and we denote $\widehat{\Ham}(\R^{2n},\omega_0)$ its completion called the \textit{Humilière-completion}, see \cite{Hum}. Note that elements of $\widehat{\Ham}(\R^{2n},\omega_0)$ need not be maps. It turns out that $\Phi$ as in Theorem \ref{thm: Q to Ham} constructed above is both $\gamma$- and Hofer-continuous and hence can be extended over the completion; this is the content of the next proposition. Proposition \ref{prop: gamma continuity in general case} and Proposition \ref{prop: the image is closed for gamma} are formulated for the $\gamma$-norm, but they can also be formulated in terms of the Hofer norm. This choice was made because the completion of $\Ham(M,\omega)$ for the Hofer-norm is not studied as much as the Humilière-completion.

\begin{prop}\label{prop: gamma continuity in general case}
The map 
\[\Phi \colon \mathbb{Q} \to \Ham(\R^{2n},\omega_0)\]
as constructed above is $\gamma$-continuous. As a consequence, $\Phi$ extends as a homomorphism in the Humilière-completion
\[\widehat{\Phi} \colon \mathbb R \to \widehat{\Ham}(\R^{2n},\omega_0).\]
Moreover, the map $\widehat{\Phi}$ is injective.
\end{prop}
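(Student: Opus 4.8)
I would prove $\gamma$-continuity of $\Phi$ by exploiting property (v) of Proposition~\ref{prop: gamma norm} (monotonicity of $\gamma$ under the pointwise order on Hamiltonians) together with the conjugation-invariance (iii), reducing everything to a uniform bound on $\gamma(S_t)$ for small $t$. The key observation is that $\Phi_n(\lambda)=H_nS_{\lambda\alpha_{n+1}}H_n^{-1}$, so by (iii) one has $\gamma(\Phi_n(\lambda))=\gamma(S_{\lambda\alpha_{n+1}})$, which does not see the conjugating diffeomorphisms $H_n$ at all. Since $S$ is the autonomous flow of the fixed, compactly supported radial Hamiltonian $G$ (with $0\le G\le 1$), one has $S_t=\varphi^1_{tG}$ and $tG\le t\cdot\mathbf 1_{\operatorname{supp}G}$; using (v) and a crude estimate one gets $\gamma(S_t)\le C t$ for $0\le t\le 1$ for some constant $C$ depending only on $G$ (and symmetrically for $t<0$, using $\gamma(\varphi)=\gamma(\varphi^{-1})$ and $S_{-t}=S_{1-t}$ after renormalizing, or just bounding by the Hofer norm via (iv): $\gamma(S_t)\le d_H(Id,S_t)\le |t|\,(\max G-\min G)=|t|$). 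This last route via (iv) is cleanest: $\gamma(S_t)\le |t|(\max_M G-\min_M G)$ for all $t\in\R$, hence $\gamma(\Phi_n(\lambda))\le|\lambda\alpha_{n+1}|(\max G-\min G)$.

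Next I would pass to the limit. For fixed $q\in\mathbb Q$, write it as $q=a/b$; then $\Phi(q)=\lim_n\Phi_n(q)$ in $C^\infty$, and I need $\gamma(\Phi(q))\le C|q|$ as well. The subtlety is that $\Phi(q)$ is a genuine diffeomorphism, so $\gamma(\Phi(q))$ makes sense in $\Ham(\R^{2n},\omega_0)$, and I can estimate it by $\gamma(\Phi_n(q))+\gamma(\Phi(q)\Phi_n(q)^{-1})$ using the triangle inequality (i); the first term is $\le C|q\alpha_{n+1}|\to C|q|$ (recall $\alpha_n\to$ some limit; actually the $\alpha_n$ converge, call the limit $\alpha_\infty$, and by rescaling $S$ we may as well track $\gamma(\Phi_n(q))=\gamma(S_{q\alpha_{n+1}})\le|q\alpha_{n+1}|(\max G-\min G)$), and the second term is bounded by $d_H(Id,\Phi(q)\Phi_n(q)^{-1})$, which tends to $0$ because $C^\infty$-convergence with uniformly compact supports implies Hofer-convergence (this is a standard fact; the Hamiltonians generating the short paths between $\Phi_n(q)$ and $\Phi(q)$ have oscillation $\to 0$). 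Hence $\gamma(\Phi(q))\le C|q|$ for all $q$, and then for any $q,q'$ we get $\gamma(\Phi(q)\Phi(q')^{-1})=\gamma(\Phi(q-q'))\le C|q-q'|$ using the homomorphism property, which is exactly $\gamma$-continuity (indeed $\gamma$-Lipschitz continuity) of $\Phi$. Uniform continuity then yields the extension $\widehat\Phi\colon\R\to\widehat{\Ham}(\R^{2n},\omega_0)$, and it is a homomorphism because $\Phi$ is one on the dense subgroup $\mathbb Q$ and the group operations on the completion are continuous.

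For injectivity of $\widehat\Phi$, I would argue by contradiction: suppose $\widehat\Phi(t)=Id$ for some $t\ne 0$. Since $\widehat\Phi$ is a homomorphism of $\R$, its kernel is a subgroup, so it contains a nonzero $t$ and hence, being closed (preimage of the closed point $Id$ under the continuous $\widehat\Phi$), either it is all of $\R$ or it is a discrete subgroup $r\Z$ for some $r>0$. The goal is to rule both out by producing, for a well-chosen rational $q$, a lower bound $\gamma(\Phi(q))>0$ — but in fact $\gamma(\Phi(q))>0$ for every $q\ne 0$ already follows from property (ii): $\Phi(q)\ne Id$ because $\Phi(q)=\Phi(\ell)^{1/\cdots}$ has a dense orbit in $U$ for $q$ of the form it controls, or more simply because if $\Phi(q)=Id$ for some $q=a/b\ne0$ then $\Phi(a/b)^b=\Phi(a)=Id$, but $\Phi(a)=\Phi(\ell)$ for $\ell=|a|$ has a dense orbit in $U$ by property~(iii) of the construction and is therefore not the identity — contradiction. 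So $\ker\Phi=\{0\}$ as a map on $\mathbb Q$, giving $\gamma(\widehat\Phi(q))=\gamma(\Phi(q))>0$ for all $q\in\mathbb Q\setminus\{0\}$ by (ii), and by $\gamma$-continuity of $\widehat\Phi$ the same strict positivity propagates to all $t\ne0$: if $\widehat\Phi(t)=Id$ then $t$ is a limit of rationals $q_k\to t$, but one needs a lower bound $\gamma(\widehat\Phi(t))\ge c>0$, which is the one place the argument is not free.

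**Main obstacle.** The genuinely delicate point is the lower bound: knowing $\gamma(\Phi(q))>0$ for each individual nonzero rational $q$ does not immediately give $\gamma(\widehat\Phi(t))>0$ for irrational $t$, since $\gamma$ could a priori degenerate in the limit. I expect the right fix is to use the dense-orbit property quantitatively: the construction makes $\Phi(\ell)$ have a dense orbit in the fixed open set $U=B_{1-\varepsilon}\setminus\overline{B_\varepsilon}$, and one can extract from this a displacement-energy-type lower bound — a map with a dense orbit in $U$ must move some ball of definite size off itself, or one can use that $\gamma$ bounds below the displacement energy of a small ball that gets displaced, giving $\gamma(\Phi(\ell))\ge \delta$ for a $\delta$ depending only on $U$ and independent of $\ell$. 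Combined with the homomorphism relation $\widehat\Phi(\ell)=\widehat\Phi(t)^{?}$ this forces $\ker\widehat\Phi$ to avoid a whole neighborhood structure and collapses it to $\{0\}$. I would present this displacement lower bound as the technical heart of the proof, citing the relation between $\gamma$ and displacement energy (e.g. $\gamma(\varphi)\ge \tfrac12 e(A)$ whenever $\varphi$ displaces $A$, or the analogous inequality in $\R^{2n}$), and keep the upper-bound half — which is the routine part — brief.
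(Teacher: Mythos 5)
Your continuity argument is essentially sound (though longer than necessary), but your injectivity argument has a genuine gap, which you yourself flag at the end; the gap is not closed by the displacement-energy heuristic you sketch, and the clean fix is actually already sitting in your first paragraph, unused.

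You correctly observe, via conjugation-invariance (iii), that $\gamma(\Phi_n(\lambda))=\gamma(S_{\lambda\alpha_{n+1}})$. But you then only use this as an \emph{upper} bound (via (iv)) and discard the equality. The paper's proof keeps the equality and passes it to the limit: since $\Phi_n(r)\to\Phi(r)$ in $C^\infty$ (hence in $\gamma$) and $S_{\alpha_{n+1}r}\to S_{\alpha r}$ in the Hofer norm along the autonomous flow $S$, one gets the exact identity
\[
\gamma(\Phi(r))=\lim_n\gamma(\Phi_n(r))=\lim_n\gamma(S_{\alpha_{n+1}r})=\gamma(S_{\alpha r})
\]
for every rational $r$, and then by $\gamma$-continuity of both sides, $\gamma(\widehat\Phi(t))=\gamma(S_{\alpha t})$ for \emph{all} $t\in\R$. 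This identity gives continuity at $0$ in one line (since $\gamma(S_{\alpha t})\to 0$ as $t\to 0$), and it gives the injectivity lower bound for free: by non-degeneracy (ii), $\gamma(S_{\alpha t})=0$ iff $S_{\alpha t}=\mathrm{Id}$, and for the radial Hamiltonian $G$ the flow $S_s$ equals the identity only at $s=0$ (the rotation speed varies continuously across shells in the smoothing region, so no nonzero time returns every shell to its start). Hence $\ker\widehat\Phi=\{0\}$.

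Your proposed fix --- deriving a uniform $\gamma$-lower bound from dense orbits and displacement energy --- does not obviously work: the dense-orbit property is established for $\Phi(\ell)$ with $\ell$ a positive integer, and gives no control as $q$ ranges over a sequence of rationals converging to an irrational $t$; you would need a displacement bound uniform along that sequence, which is exactly what you could not produce. The exact identity $\gamma(\widehat\Phi(t))=\gamma(S_{\alpha t})$ sidesteps all of this. So: the upper-bound half of your proof is fine but can be shortened, and the lower-bound half must be replaced by the limit of the conjugation-invariance identity, which you derived but did not exploit.
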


\begin{proof}
Since $\Phi$ is a group homomorphism, it suffices to show that $\Phi$ is $\gamma$-continuous at 0. 

Let $\Phi_n(t)=H_n S_{\alpha_n t}H_n^{-1}$ be the sequence of maps constructed in Section \ref{sec: proof of prop non-aut with roots} and let $r$ be a rational number. By construction, we know that the sequence of maps $\Phi_n(r)$ converges in the $C^\infty$-topology to $\Phi(r)$, hence also in the $\gamma$-topology. This and the conjugation-invariance of the $\gamma$-norm implies that
\begin{align*}
\gamma(\Phi(r))&=\lim_{n \to \infty} \gamma(\Phi_n(r))\\
&=\lim_{n \to \infty} \gamma(H_nS_{\alpha_n r}H_n^{-1})\\
&=\lim_{n\to \infty} \gamma(S_{\alpha_n r})\\
&=\gamma(S_{\alpha r})
\end{align*}
where $\alpha\in \mathbb R$ is the limit of the sequence $(\alpha_n)_n$.
Therefore, we find $\lim_{r \to \infty}\gamma(\Phi(r))=0$ as expected. Thus, we can define $\widehat{\Phi}\colon \R \to \widehat{\Ham}(\R^{2n},\omega_0)$ the extension of $\Phi$ to the real line.
To show that the map is injective, we need to notice that for all $t \in \R$, $\gamma(\widehat{\Phi}(t))=\gamma(S_{\alpha t})$ vanishes if and only if $S_{\alpha t}=Id$ that is if and only if $t=0$.

\end{proof}

\begin{prop}\label{prop: the image is closed for gamma}
The image of the extension $\widehat{\Phi}$ of Proposition \ref{prop: gamma continuity in general case} is closed for the $\gamma$-topology inside $\widehat{\Ham}(\R^{2n},\omega_0)$.
\end{prop}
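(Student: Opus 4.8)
The plan is to exploit the computation from the proof of Proposition \ref{prop: gamma continuity in general case}, which identifies the $\gamma$-length of $\widehat{\Phi}(t)$ with that of the circle flow: namely $\gamma(\widehat{\Phi}(t)) = \gamma(S_{\alpha t})$ for all $t \in \R$. The key structural fact I would extract is that $\widehat{\Phi}$ is a \emph{$\gamma$-isometric} homomorphism onto its image in a strong sense: the function $t \mapsto \gamma(S_{\alpha t})$ is the restriction to $\R$ of the $\gamma$-norm of a genuine circle action, hence it is continuous, it vanishes only at $t \in \frac{1}{\alpha}\Z$ (assuming, as we may after rescaling $G$, that $S$ is exactly $1$-periodic on $U$ and $\alpha$ is the common limit of the $\alpha_n$), and more importantly for any $s,t$ one has control of $\gamma(\widehat{\Phi}(s)\widehat{\Phi}(t)^{-1}) = \gamma(\widehat{\Phi}(s-t)) = \gamma(S_{\alpha(s-t)})$. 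So the image of $\widehat{\Phi}$, with the metric induced from $\widehat{\Ham}(\R^{2n},\omega_0)$, is isometric to $\R$ (or to $\R/(\tfrac{1}{\alpha}\Z)$, depending on the periodicity of $S_{\alpha\,\cdot}$) equipped with the translation-invariant metric $\rho(s,t) = \gamma(S_{\alpha(s-t)})$.

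Granting this, I would argue as follows. Let $(\widehat{\Phi}(t_k))_k$ be a sequence in the image that is $\gamma$-Cauchy in $\widehat{\Ham}(\R^{2n},\omega_0)$; I must produce $t_\infty \in \R$ with $\widehat{\Phi}(t_k) \to \widehat{\Phi}(t_\infty)$. Since $\gamma(\widehat{\Phi}(t_k)\widehat{\Phi}(t_j)^{-1}) = \gamma(S_{\alpha(t_k - t_j)})$, the Cauchy condition says that $\gamma(S_{\alpha(t_k-t_j)}) \to 0$ as $j,k \to \infty$. Because $S|_U$ is a free circle action, $t \mapsto \gamma(S_{\alpha t})$ is a continuous, $\tfrac{1}{\alpha}$-periodic function on $\R$ vanishing exactly on $\tfrac{1}{\alpha}\Z$, and it is bounded below by a positive constant away from any neighborhood of $\tfrac{1}{\alpha}\Z$; hence $\gamma(S_{\alpha(t_k-t_j)}) \to 0$ forces the images of $t_k$ in $\R/\tfrac{1}{\alpha}\Z$ to form a Cauchy sequence for the quotient metric, so they converge to some class $\bar t_\infty$. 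Lifting and using continuity of $\widehat{\Phi}$ (equivalently, of $t \mapsto \gamma(S_{\alpha t})$) gives $\gamma(\widehat{\Phi}(t_k)\widehat{\Phi}(t_\infty)^{-1}) = \gamma(S_{\alpha(t_k - t_\infty)}) \to 0$, i.e. $\widehat{\Phi}(t_k) \to \widehat{\Phi}(t_\infty)$ in the completion. Therefore the image is complete, hence closed in $\widehat{\Ham}(\R^{2n},\omega_0)$.

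The main obstacle I anticipate is the claim that $\gamma(S_{\alpha(t_k-t_j)}) \to 0$ actually forces $t_k - t_j$ to go to zero modulo $\tfrac{1}{\alpha}\Z$; this is really the statement that $0$ is \emph{isolated} among the $\gamma$-values along the path $t \mapsto S_{\alpha t}$, equivalently that a nontrivial element $S_{\theta}$ of the circle action has $\gamma$-norm bounded away from $0$ when $\theta$ stays in a compact subset of $(0,1)$, together with continuity at the endpoints. This needs a small argument: continuity of $\theta \mapsto \gamma(S_\theta)$ follows from the $C^\infty$-continuity (hence $\gamma$-continuity, by Proposition \ref{prop: gamma norm}(iv) and Hofer-continuity of smooth flows, or directly) of the action $\theta \mapsto S_\theta$, and positivity for $\theta \in (0,1)$ is Proposition \ref{prop: gamma norm}(ii) since $S_\theta \neq Id$ there (the action on $U$ is free). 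Compactness of $[\delta, 1-\delta]$ then yields the uniform positive lower bound. I would also need to double-check the harmless normalization that $\alpha \neq 0$, which holds because $\alpha$ is a limit of $\alpha_n$ whose numerators are divisible by $n!$ but which are chosen dense and converging fast to a generic irrational — in fact the construction gives $\alpha$ irrational, so $S_{\alpha\,\cdot}$ has no nontrivial period and the image is genuinely isometric to all of $\R$, which if anything simplifies the argument. The remaining steps are routine once this separation estimate is in place.
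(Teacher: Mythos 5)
Your overall strategy matches the paper's: identify $\gamma(\widehat{\Phi}(t))$ with $\gamma(S_{\alpha t})$, observe that Cauchyness of $(\widehat{\Phi}(t_k))$ yields $\gamma(S_{\alpha(t_k-t_j)})\to 0$, extract from this a separation estimate forcing $(t_k)$ to converge in $\R$, and close by continuity. The gap is in how you obtain the separation estimate. You claim that $t\mapsto\gamma(S_{\alpha t})$ is $\tfrac{1}{\alpha}$-periodic and, equivalently, try to run the argument modulo $\tfrac{1}{\alpha}\Z$; this is not true in the setting of Proposition~\ref{prop: gamma continuity in general case}. There the flow $S$ is a compactly supported Hamiltonian flow on $\R^{2n}$ generated by $G(x)=u(|x|)$, and $S$ is $1$-periodic \emph{only on} the annulus $U$, not globally. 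Off $U$, in the smoothing regions of $u$, the angular speed varies, so $S_1\neq\mathrm{Id}$, the flow has no period, and $\widehat{\Phi}$ does not descend to $\R/\tfrac{1}{\alpha}\Z$. You appear to be importing the structure of the toric case (Proposition~\ref{prop: gamma continuity in toric case}, where $S$ genuinely is a circle action) into the $\R^{2n}$ case.

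Your fall-back via compactness of $[\delta,1-\delta]$ does not repair this: it gives a positive lower bound for $\gamma(S_\theta)$ on a fixed compact window, not for all $\theta$ bounded away from $0$. Without a uniform bound away from the origin you cannot rule out that $t_k-t_j$ drifts off to infinity while $\gamma(S_{\alpha(t_k-t_j)})\to 0$, and pointwise positivity plus continuity alone is insufficient. The paper's fix is different and crucial: since $G\geq 0$, property~(v) of Proposition~\ref{prop: gamma norm} (monotonicity of $\gamma$ with respect to the generating Hamiltonian) implies that $t\mapsto\gamma(S_{\alpha t})$ is increasing on $\R_{\geq 0}$. Combined with positivity at any fixed $\delta>0$ and evenness of the norm, this gives the uniform lower bound $\gamma(S_{\alpha t})\geq\gamma(S_{\alpha\delta})>0$ for all $|t|\geq\delta$, hence $t_k-t_j\to 0$. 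This monotonicity step is exactly what your proposal is missing, and it is the reason the paper constructs $S$ from a positive radial Hamiltonian in the first place.
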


\begin{proof}
Let $\varphi \in \widehat{\Ham}(\R^{2n},\omega_0)$ be a point in the $\gamma$-closure of $\text{Im}(\widehat{\Phi})$. We show that $\varphi$ is in $\text{Im}(\widehat{\Phi})$. Since $\widehat{\Ham(\mathbb R^{2n},\omega)}$ is a metric space, there exists a sequence $x_n \in \R$ such that the sequence $(\widehat{\Phi}(x_n))_n$ $\gamma$-converges to $\varphi$. Hence, the sequence $(\widehat{\Phi}(x_n))_n$ is a Cauchy sequence and therefore
\[
\gamma(\widehat{\Phi}(x_n) (\widehat{\Phi}(x_m))^{-1}) \xrightarrow[n,m \to +\infty]{\gamma} 0.
\]
Using the fact that we have a group homomorphism, we get
\[\gamma(\widehat{\Phi}(x_n-x_m))\xrightarrow[n,m \to +\infty]{\gamma}0.\]
However, as shown above, $\gamma(\widehat{\Phi}(x_n-x_m))=\gamma(S_{\alpha(x_n-x_m)})$. Since $S$ is generated by a positive Hamiltonian in our construction (see the beginning of Section \ref{sec: proof of prop non-aut with roots}), the map $t \mapsto \gamma(S_{\alpha t})$ increases on $\mathbb{R}_{\geq 0}$. This implies that
\[x_n-x_m \xrightarrow[n,m \to +\infty]{} 0,\]
in other words the sequence $(x_n)_n$ is a Cauchy sequence itself. In particular, $(x_n)_n$ converges to $x \in \R$ and by $\gamma$-continuity,
\[\widehat{\Phi}(x_n) \xrightarrow[n\to +\infty]{\gamma} \widehat{\Phi}(x)=\varphi.\]
Thus, $\varphi$ is indeed in $\text{Im}(\widehat{\Phi})$.
\end{proof}

We ask the following natural question.

\begin{ques}\label{ques: honest maps in the extension}
Can we associate elements of $\text{Im}(\widehat{\Phi})$ with honest maps? Are they homeomorphisms? Are they diffeomorphisms?
\end{ques}

\subsection{The special case of manifolds with a Hamiltonian circle action}

In this section, we restrict our attention to the case where the symplectic manifold $(M,\omega)$ admits a Hamiltonian circle action $S \colon S^1 \to \Ham(M,\omega)$. In this setting, we can adapt Proposition \ref{prop: gamma norm} and Proposition \ref{prop: gamma continuity in general case} to prove the following. Note that the $\gamma$-norm can be defined on every symplectic manifold using Floer homology by a work of Fukaya and Ono, \cite{FuOn99}. In general, the $\gamma$-norm enjoys the same properties as the one defined by Viterbo, except for property (v) of Proposition \ref{prop: gamma norm}.

\begin{prop}\label{prop: gamma continuity in toric case}
Let $(M,\omega)$ be a symplectic manifold with a Hamiltonian circle action $S$. Then, the map 
\[\Phi \colon \mathbb{Q} \to \Ham(M,\omega)\]
as constructed in Theorem \ref{thm: Q to Ham} is $\gamma$-continuous. As a consequence, $\Phi$ extends as a homomorphism in the Humilière-completion
\[\widehat{\Phi} \colon \mathbb R \to \widehat{\Ham}(M,\omega).\]
Moreover, the map $\widehat{\Phi}$ is $1/\alpha$-periodic for an irrational number $\alpha\in \mathbb R$ and descends to a map, still denoted $\widehat{\Phi}$,
\[\widehat{\Phi} \colon S^1\to \widehat{\Ham}(M,\omega).\]
This last map is injective and its image is closed for the $\gamma$-topology.
\end{prop}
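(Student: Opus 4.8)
The plan is to bootstrap everything from the general $\R^{2n}$ case (Propositions \ref{prop: gamma continuity in general case} and \ref{prop: the image is closed for gamma}), replacing the positive radial Hamiltonian $G$ by the given Hamiltonian circle action $S \colon S^1 \to \Ham(M,\omega)$ and replacing Viterbo's $\gamma$ by the Fukaya--Ono $\gamma$-norm on $\Ham(M,\omega)$. First I would rerun the Anosov--Katok construction of Section \ref{sec: proof of prop non-aut with roots} verbatim, but now with $U = M$ minus the fixed-point set of $S$ (or any invariant open set on which $S$ is free), so that the autonomous approximants take the form $\Phi_n(t) = H_n S_{\alpha_n t} H_n^{-1}$ with $S$ now literally $1$-periodic; since $S$ is a genuine circle action, the $\alpha_n$ no longer need to be rational with controlled numerator — we only need $\alpha_n \to \alpha$ fast, with $\alpha$ irrational — and I should remark that Lemma \ref{lemma: density with constraint on the numerator} is vacuous here, the divisibility constraints on the numerators of $\alpha_n$ being unnecessary in the periodic setting.

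Next, $\gamma$-continuity at $0$: exactly as in the proof of Proposition \ref{prop: gamma continuity in general case}, the $C^\infty$-convergence $\Phi_n(r) \to \Phi(r)$ implies $\gamma$-convergence, and conjugation-invariance of $\gamma$ (property (iii), which holds for the Fukaya--Ono norm) gives
\[
\gamma(\Phi(r)) = \lim_{n\to\infty} \gamma(H_n S_{\alpha_n r} H_n^{-1}) = \lim_{n\to\infty} \gamma(S_{\alpha_n r}) = \gamma(S_{\alpha r}),
\]
using continuity of $t \mapsto \gamma(S_t)$ along the smooth path $S$. Since $S$ is $1$-periodic, $t \mapsto \gamma(S_{\alpha t})$ is periodic with period $1/\alpha$, hence bounded and continuous, and $\gamma(S_{\alpha t}) \to 0$ as $t \to 0$, giving $\gamma$-continuity of $\Phi$ at $0$ and therefore everywhere by the homomorphism property. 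Thus $\Phi$ extends to $\widehat{\Phi}\colon \R \to \widehat{\Ham}(M,\omega)$, and the identity $\gamma(\widehat{\Phi}(t)) = \gamma(S_{\alpha t})$ persists in the completion by continuity. Periodicity of $\widehat{\Phi}$ with period $1/\alpha$ follows because $\widehat{\Phi}(t + 1/\alpha)\widehat{\Phi}(t)^{-1} = \widehat{\Phi}(1/\alpha)$ has $\gamma(\widehat{\Phi}(1/\alpha)) = \gamma(S_1) = \gamma(Id) = 0$, so $\widehat{\Phi}(1/\alpha) = Id$ by nondegeneracy of $\gamma$ on the completion; hence $\widehat{\Phi}$ descends to $S^1 = \R/(1/\alpha)\Z$.

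For injectivity of the descended map and closedness of the image: injectivity on $S^1$ means $\widehat{\Phi}(t) = Id$ forces $t \in (1/\alpha)\Z$, which follows from $\gamma(\widehat{\Phi}(t)) = \gamma(S_{\alpha t}) = 0 \iff S_{\alpha t} = Id \iff \alpha t \in \Z \iff t \in (1/\alpha)\Z$. For the image being $\gamma$-closed I would adapt the proof of Proposition \ref{prop: the image is closed for gamma}: given $x_n \in \R$ with $\widehat{\Phi}(x_n)$ Cauchy, one gets $\gamma(S_{\alpha(x_n - x_m)}) \to 0$, so the classes $[x_n]$ form a Cauchy sequence in $S^1$ for the metric $d(s,t) = \gamma(S_{\alpha(s-t)})$ — here I must check this is a genuine metric inducing the standard topology on the circle, which holds since $t \mapsto \gamma(S_{\alpha t})$ is continuous, vanishes exactly on the lattice, and $S^1$ is compact so the sequence subconverges to some $[x] \in S^1$; then $\gamma$-continuity gives $\widehat{\Phi}(x_n) \to \widehat{\Phi}(x)$, so the limit lies in the image. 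The main obstacle I anticipate is precisely this last point: without property (v) (monotonicity), the argument of Proposition \ref{prop: the image is closed for gamma} that extracted a genuine Cauchy sequence of \emph{reals} breaks down, but compactness of $S^1$ rescues it — one only needs that $d$ separates points of $S^1$ and that Cauchy sequences in a compact metric space converge, so the role of monotonicity is replaced by compactness of the orbit $S^1$. A secondary point to verify carefully is that the $\gamma$-norm of Fukaya--Ono is indeed continuous along smooth Hamiltonian paths and conjugation-invariant on $\Ham(M,\omega)$, both of which are standard.
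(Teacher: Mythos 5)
Your core argument is correct and follows the paper's own route: $\gamma(\Phi(r))=\gamma(S_{\alpha r})$ via conjugation invariance and $C^\infty$-convergence, $1/\alpha$-periodicity from $\gamma(\widehat{\Phi}(1/\alpha))=\gamma(S_1)=0$ and nondegeneracy, injectivity from the vanishing locus of $t\mapsto\gamma(S_{\alpha t})$, and closedness of the image by adapting Proposition~\ref{prop: the image is closed for gamma}. You correctly identify that the absence of property~(v) for the Fukaya--Ono norm kills the monotonicity argument and that compactness of $S^1$ is the substitute; your spelled-out version (using that $d(s,t)=\gamma(S_{\alpha(s-t)})$ is a metric on $S^1$ which, being continuous against the standard topology on a compact Hausdorff space, must induce it) makes precise what the paper only asserts in one line.

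There is, however, an error in your preamble about the Anosov--Katok set-up. You claim that when $S$ is a genuine circle action the divisibility constraint on the numerator $p_n$ of $\alpha_n$ (and hence Lemma~\ref{lemma: density with constraint on the numerator}) becomes unnecessary. This is not so: that constraint has nothing to do with whether $S$ is periodic on all of $M$ or only on $U$. Its role is to guarantee that $h_n$ commutes with $S_{\lambda\alpha_n}$ for every $\lambda=1/k$ with $k\le n$, i.e.\ that $\lambda\alpha_n=\frac{p_n}{kq_n}$ is an integer multiple of $1/q_n$, which requires $k\mid p_n$. Without it, identity~(\ref{equ.commutingcondition}) and the Cauchy estimate~(\ref{equ.convergenceofmaps}) fail for $\Phi_n(1/k)$, and the diagonal convergence giving a group homomorphism $\Phi$ on all of $\Q$ breaks down. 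Likewise, the $\alpha_n$ must still be rational, since $q_n$ enters the construction through the quotient $U/S_{1/q_n}$. Since this remark is not load-bearing for Proposition~\ref{prop: gamma continuity in toric case} itself, it does not invalidate the proof, but it does reflect a misreading of Section~\ref{subsec: Scheme} and should be deleted.
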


\begin{proof}
As in the proof of Proposition \ref{prop: gamma continuity in general case}, we have that
\[\gamma(\Phi(r))=\gamma(S_{\alpha r}).\]
 Since the map $t \mapsto S_{\alpha t}$ is $1/\alpha$-periodic and $\widehat{\Phi}$ is a group homomorphism, we can conclude that $\widehat{\Phi}$ is also $1/\alpha$-periodic by the non-degeneracy of the $\gamma$-norm. 

To show that the image is closed, we adapt the proof of Proposition \ref{prop: the image is closed for gamma}  by noticing that the map $t \mapsto \gamma(S_{\alpha t})$ is $1/\alpha$-periodic, continuous and takes the value 0 only at integer multiples of $\frac{1}{\alpha}$. Hence, a sequence of maps $(\widehat{\Phi}(x_n))_n$ is a Cauchy sequence for the $\gamma$-topology implies that the sequence $(x_n)_n$ is also Cauchy in $S^1$, we conclude the proof as in Proposition \ref{prop: the image is closed for gamma}.
\end{proof}

We are still unable to answer Question \ref{ques: honest maps in the extension} in this setting, but we have the following partial answer for $(\mathbb CP^n,\omega_{FS})$ the standard symplectic projective space. We recall that in this case, the $\gamma$-norm is known to be continuous for the $C^0$-topology; Kawamoto has proved this fact in \cite{Kaw22}.

\begin{prop}\label{prop: uncountable elts that are homeos}
Let
\[\widehat{\Phi} \colon S^1\to \widehat{\Ham}(\mathbb CP^n,\omega_{FS})\]
be the extension defined in Proposition \ref{prop: gamma continuity in toric case} and assume that the rotation number of $\Phi(1)$ is exponentially Liouville, then there exists an uncountable subgroup $G \subset S^1$ such that all the elements in $\widehat{\Phi}(G)$ are all homeomorphisms.
\end{prop}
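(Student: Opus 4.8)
The plan is to exploit the Anosov--Katok structure of $\widehat{\Phi}$ together with the $C^0$-continuity of the $\gamma$-norm on $(\mathbb{CP}^n,\omega_{FS})$ proved by Kawamoto \cite{Kaw22}. Recall that $\widehat{\Phi}(t) = \lim_n \Phi_n(t) = \lim_n H_n S_{\alpha_n t} H_n^{-1}$ in the $\gamma$-topology, where $\alpha_n \to \alpha$ and $\alpha$ is the rotation number of $\Phi(1)$. The first step is to isolate those $t \in S^1$ for which the orbit $(\alpha_n t)_n$ behaves well. Writing $\alpha_n = p_n/q_n$ with fast convergence $|\alpha - \alpha_n| \ll 1/q_n^{?}$ (quantified by the exponentially Liouville hypothesis on $\alpha$), I would show that there is an uncountable subgroup $G \subset S^1$ — for instance, generated by a suitable Liouville-type element together with the rationals, or the subgroup of $t$ for which $\{q_n t\}$ decays fast enough — such that for each $t \in G$ the conjugates $H_n S_{\alpha_n t} H_n^{-1}$ converge not merely in $\gamma$ but in $C^0$ to an honest homeomorphism.

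The key mechanism is the following: for $t$ with $q_n t$ close to an integer, $S_{\alpha_n t}$ is $C^\infty$-close to $S_{\alpha_{n+1} t}$ on the support of the circle action (because the commuting condition \eqref{equ.commutingcondition} was arranged so that $H_n$ and $H_{n+1}$ agree on the relevant scales), so the conjugation by $H_n$ does not destroy $C^0$-smallness of the increments. Concretely, one estimates
\[
d_{C^0}\bigl(\Phi_{n+1}(t),\Phi_n(t)\bigr) = d_{C^0}\bigl(H_{n+1} S_{\alpha_{n+1} t} H_{n+1}^{-1},\, H_{n+1} S_{\alpha_n t} H_{n+1}^{-1}\bigr) \le \mathrm{Lip}(H_{n+1})\cdot d_{C^0}(S_{\alpha_{n+1}t}, S_{\alpha_n t}),
\]
and although $\mathrm{Lip}(H_{n+1})$ grows, it grows in a controlled way (depending only on the previously fixed data at step $n+1$), so choosing $\alpha_{n+2}$ — hence $t$ — in a sufficiently thin Cantor-like set forces the right-hand side to be summable. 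This produces a $C^0$-Cauchy sequence, whose limit is a homeomorphism, and which must coincide with $\widehat{\Phi}(t)$ since $C^0$-convergence implies $\gamma$-convergence by Kawamoto's theorem. The set of admissible $t$ is a nested intersection of sets each of which still contains an affinely-embedded copy of a Cantor set, hence is uncountable; closing it under addition (and noting the construction is compatible with the group law, as each constraint is on $q_n t \bmod 1$) yields the subgroup $G$.

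The main obstacle I anticipate is the tension between two requirements on the sequence $(\alpha_n)$: the original construction already imposes fast $C^\infty$-convergence of $\Phi_n$ on all of $\mathbb{Q}$ and the density-of-orbits condition (iii), which pins down each $\alpha_{n+1}$ rather tightly, and now I additionally want the Liouville-type arithmetic of $\alpha$ to be compatible with an uncountable set of $t$ having fast $C^0$-convergence. One must check that the "room" left after satisfying conditions (i)--(iii) is still enough — i.e. that at each stage the set of good $\alpha_{n+1}$ is an open dense subset of an interval (by Lemma \ref{lemma: density with constraint on the numerator}) whose intersection over $n$ leaves the rotation number free to be exponentially Liouville, and simultaneously that the corresponding set of good $t$ remains uncountable. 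I would handle this by interleaving the two diagonal arguments: at odd steps refine $\alpha_n$ for the $C^\infty$/orbit conditions, at even steps refine for the arithmetic needed to run the $C^0$-estimate above, using that "exponentially Liouville" gives exponentially good rational approximations $q_n$, which is exactly the regime in which $\mathrm{Lip}(H_{n+1})\cdot d_{C^0}(S_{\alpha_{n+1}t},S_{\alpha_n t})$ can be beaten. The verification that the resulting limit maps are genuine homeomorphisms (not merely continuous maps) uses that each $\Phi_n(t)$ is a diffeomorphism and the inverses $\Phi_n(t)^{-1} = \Phi_n(-t)$ converge $C^0$ by the same estimate applied to $-t \in G$.
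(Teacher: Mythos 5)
Your proposal takes a fundamentally different and much more delicate route than the paper, and it has a genuine gap.

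The paper's proof is soft. It invokes Theorem \ref{thm: cinfty rigidity of AKPR} (Joksimovic--Seyfaddini): under the exponentially Liouville assumption, $\Phi(1)$ is $C^0$-rigid, i.e.\ there is a sequence $n_k$ with $\Phi(n_k)\to Id$ in $C^0$. Combined with the group law, this gives Corollary \ref{coro: our map has no isolated points}: $\mathrm{Im}(\Phi)$ has no isolated points in the $C^0$-topology. Then $\overline{\mathrm{Im}(\Phi)}^{C^0}$ is a perfect complete metric space, hence uncountable by Lemma \ref{lemma: complete metric no isolated is uncountable}. Kawamoto's $C^0$-continuity of $\gamma$ on $(\mathbb{CP}^n,\omega_{FS})$ together with the $\gamma$-closedness of $\mathrm{Im}(\widehat\Phi)$ (Proposition \ref{prop: gamma continuity in toric case}) give the inclusion $\overline{\mathrm{Im}(\Phi)}^{C^0}\subset \mathrm{Im}(\widehat\Phi)$, and one sets $G:=\widehat\Phi^{-1}(\overline{\mathrm{Im}(\Phi)}^{C^0})$. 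Every element of $\widehat\Phi(G)=\overline{\mathrm{Im}(\Phi)}^{C^0}$ is a homeomorphism, and $G$ is an uncountable subgroup of $S^1$. Your proposal never uses the Joksimovic--Seyfaddini theorem at all, which is precisely the black box that makes the uncountability accessible without redoing the hard convergence analysis.

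The gap in your direct approach is the step
\[
d_{C^0}\bigl(\Phi_{n+1}(t),\Phi_n(t)\bigr)=d_{C^0}\bigl(H_{n+1}S_{\alpha_{n+1}t}H_{n+1}^{-1},\,H_{n+1}S_{\alpha_nt}H_{n+1}^{-1}\bigr).
\]
Aside from an index shift (with the paper's convention $\Phi_n(\lambda)=H_nS_{\lambda\alpha_{n+1}}H_n^{-1}$), this equality requires that the same conjugator $H_{n+1}$ can be used on both sides, i.e.\ that $h_{n+1}$ commute with $S_{\alpha_{n+1}t}$. But $h_{n+1}$ is only arranged to commute with $S_{1/q_{n+1}}$, so this works iff $p_{n+1}t\in\mathbb Z$. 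The commuting condition \eqref{equ.commutingcondition} is genuinely a statement about $\lambda\in\mathbb Q$ with small denominator; for irrational $t$ it simply fails, and then one must compare $H_nS_{\alpha_{n+1}t}H_n^{-1}$ with $H_{n+1}S_{\alpha_{n+2}t}H_{n+1}^{-1}$ where the conjugators differ, and no single Lipschitz bound controls the difference. So the very set of $t$ you want to populate with a Cantor set is exactly the set where your estimate has no leg to stand on. You correctly anticipate a ``tension'' in the construction, but the resolution you sketch (interleaving refinements of $\alpha_n$) does not address this obstruction; what is actually needed is a theorem of Joksimovic--Seyfaddini type, which the paper cites rather than reproves. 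Finally, note that even with a correct $C^0$-Cauchy argument you would still need a bridge back to $\widehat\Phi$; the paper supplies exactly this bridge via Kawamoto's $C^0$-continuity of $\gamma$ and the $\gamma$-closedness of the image, a step your sketch mentions only in passing.
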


In the construction of the map $\Phi$ of Theorem \ref{thm: Q to Ham}, we say that the number $\alpha \in \R$ that is the limit of the sequence $(\alpha_n)$ is the \textit{rotation number} of the Hamiltonian diffeomorphism $\Phi(1)$. We also say that a real number $\alpha$ is \textit{exponentially Liouville} if
\[\forall c >0, \exists k,n \in \Z \text{ s.t. } 0< \vert k\alpha-n\vert < e^{-ck} \]
The set of exponentially Liouville numbers is residual and hence non-empty; see \cite{GG18}.

To prove Proposition \ref{prop: uncountable elts that are homeos}, we need the following two lemmas and one corollary. We postpone their proof for the end of this section.

\begin{lemma}\label{lemma: complete metric no isolated is uncountable}
    A complete metric space with no isolated points is uncountable.
\end{lemma}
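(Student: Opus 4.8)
The plan is to argue by contradiction, exploiting completeness through the Baire category theorem (equivalently, through a direct nested-balls construction). Assume $X$ is a nonempty complete metric space with no isolated points, and suppose for contradiction that $X$ is countable, say $X=\{x_n : n\geq 1\}$.

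First I would observe that each singleton $\{x_n\}$ is closed and has empty interior: if $\{x_n\}$ contained an open ball $B(x_n,\varepsilon)$, then that ball would equal $\{x_n\}$, making $x_n$ an isolated point, contrary to hypothesis. Hence each $\{x_n\}$ is nowhere dense, and $X=\bigcup_{n\geq 1}\{x_n\}$ is a countable union of nowhere dense sets, i.e.\ meager in itself. This contradicts the Baire category theorem, which asserts that a nonempty complete metric space is not meager in itself. Therefore $X$ cannot be countable.

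To keep the argument self-contained one may record the elementary version that proves the Baire statement directly in this special case. Construct inductively a nested sequence of closed balls $\overline{B}(y_k,r_k)$ with $r_k\leq 1/k$, with $\overline{B}(y_{k+1},r_{k+1})\subset B(y_k,r_k)$, and with $x_k\notin \overline{B}(y_k,r_k)$. This choice is possible at each step because the open ball $B(y_k,r_k)$ is nonempty and, having no isolated points, is infinite, so one can pick $y_{k+1}$ in it distinct from $x_{k+1}$ and then a small enough radius. The centers $(y_k)$ form a Cauchy sequence since the balls are nested with radii tending to $0$; by completeness they converge to some $y\in X$, and $y\in\overline{B}(y_k,r_k)$ for every $k$, since each such ball is closed and contains all sufficiently late terms. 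Then $y\neq x_k$ for all $k$, contradicting the enumeration.

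The only point of care is the degenerate case: the statement is to be read with $X$ nonempty, since the empty metric space is complete and vacuously has no isolated points yet is finite; in the application to Proposition \ref{prop: uncountable elts that are homeos} this nonemptiness is automatic. Beyond that there is no genuine obstacle — the whole content of the lemma is the Baire category theorem for complete metric spaces.
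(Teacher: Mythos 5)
Your proof is correct, but it takes a genuinely different route from the paper's. You argue by contradiction via the Baire category theorem: if $X$ were countable, it would be a countable union of nowhere dense singletons, hence meager in itself, contradicting Baire (and you rightly give the self-contained nested-ball version, which is exactly the proof of Baire specialized to this situation). The paper instead gives a direct Cantor-scheme construction: a binary tree of nested balls with shrinking radii, indexed by $\{0,1\}^{\mathbb{N}}$, whose branches converge (by completeness) to $2^{\aleph_0}$ pairwise distinct points. The two approaches differ in what they deliver: your Baire argument shows only that $X$ is not countable, whereas the paper's binary-tree construction yields the stronger statement that $|X|\geq 2^{\aleph_0}$, i.e.\ $X$ has at least the cardinality of the continuum. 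For the purposes of Proposition \ref{prop: uncountable elts that are homeos}, uncountability is all that is needed, so both suffice. One small point you handle better than the paper does explicitly: the lemma is to be read for nonempty $X$, since the empty space vacuously satisfies the hypotheses; the paper's proof tacitly assumes this when it picks ``an arbitrary ball'' to seed the construction.
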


Joksimovic and Seyfaddini have proved the following theorem in \cite{JS24}. We recall first that a map $\varphi$ is said to be \textit{$C^0$-rigid} if there exists a sequence of positive integers $n_k$ such that
\[\varphi^{n_k} \xrightarrow[k\to +\infty]{C^0}Id.\]

\begin{thmA}[Joksimovic-Seyfaddini, \cite{JS24}]\label{thm: cinfty rigidity of AKPR}
Let $(M,\omega)$ be a symplectic manifold with a Hamiltonian circle action $S$ and $\Phi$ the map constructed above. Assume that the rotation number of $\Phi(1)$ is exponentially Liouville. Then, the map $\Phi(1)$ is $C^0$-rigid.
\end{thmA}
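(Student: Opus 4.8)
The plan is to exploit that the approximating maps $\phi_n := \Phi_n(1) = H_n S_{\alpha_{n+1}} H_n^{-1}$ of the Anosov--Katok construction are conjugates of circle rotations. Since $S$ is an honest $S^1$-action, $S_t$ depends only on $t \bmod 1$, and for every positive integer $q$ one has $\phi_n^q = H_n S_{q\alpha_{n+1}} H_n^{-1}$, conjugate to $S_{q\alpha_{n+1}}$; this map is $C^0$-close to the identity precisely when $q\alpha_{n+1}$ is close to an integer. The exponential--Liouville hypothesis on the rotation number $\alpha = \lim_n \alpha_n$ is exactly what supplies the integers $q$ — the candidates for the sequence witnessing $C^0$-rigidity — for which $q\alpha$, hence $q\alpha_{n+1}$ once $n$ is large, lies within $e^{-cq}$ of $\Z$ with $c$ as large as we wish. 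The content of the proof is to transfer this approximate periodicity from the $\phi_n$ to the diffeomorphism $\phi := \Phi(1)$ itself.

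First I would record two uniform bounds. Since $t \mapsto S_t$ is a smooth $1$-periodic path of Hamiltonian diffeomorphisms supported in the compact set $\text{supp}(S)$, there are constants $L', \Lambda_S > 0$ with $d_{C^0}(S_t, Id) \le L'\, \mathrm{dist}(t, \Z)$ and $\mathrm{Lip}(S_t) \le \Lambda_S$ for all $t \in \R$, both using $1$-periodicity. Hence $\mathrm{Lip}(\phi_n^j) = \mathrm{Lip}(H_n S_{j\alpha_{n+1}} H_n^{-1}) \le K_n := \mathrm{Lip}(H_n)\, \Lambda_S\, \mathrm{Lip}(H_n^{-1})$ for all $j \in \Z$, \emph{uniformly in $j$} — this uniformity is where $1$-periodicity of $S$ is essential, since otherwise $\mathrm{Lip}(\phi_n^j)$ would grow with $j$.

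The basic inequality follows by telescoping $\phi^q$ against $\phi_n^q$ one factor at a time and using that precomposition does not increase the sup-distance:
\[
d_{C^0}(\phi^q, \phi_n^q) \le \Big( \sum_{j=0}^{q-1} \mathrm{Lip}(\phi_n^j) \Big)\, d_{C^0}(\phi, \phi_n) \le q\, K_n\, d_{C^0}(\phi, \phi_n),
\]
while $d_{C^0}(\phi_n^q, Id) = d_{C^0}(H_n S_{q\alpha_{n+1}} H_n^{-1}, Id) \le \mathrm{Lip}(H_n)\, L'\, \mathrm{dist}(q\alpha_{n+1}, \Z)$. Since $\mathrm{dist}(q\alpha_{n+1}, \Z) \le \mathrm{dist}(q\alpha, \Z) + q\, |\alpha_{n+1} - \alpha|$, the triangle inequality gives
\[
d_{C^0}(\phi^q, Id) \le q\, K_n\, d_{C^0}(\phi, \phi_n) + \mathrm{Lip}(H_n)\, L' \big( \mathrm{dist}(q\alpha, \Z) + q\, |\alpha_{n+1} - \alpha| \big).
\]

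Finally I would select the witnessing sequence: by exponential--Liouville, fix integers $q_k \to \infty$ and $p_k$ with $0 < |q_k \alpha - p_k| < e^{-c_k q_k}$ for some $c_k \to \infty$, and for each $k$ choose $n = n(k)$ so that the three quantities $q_k K_n\, d_{C^0}(\phi, \phi_n)$, $\mathrm{Lip}(H_n)\, L'\, q_k\, |\alpha_{n+1} - \alpha|$ and $\mathrm{Lip}(H_n)\, L'\, e^{-c_k q_k}$ are each $< 1/(3k)$; then $d_{C^0}(\Phi(1)^{q_k}, Id) < 1/k$, so $\Phi(1)^{q_k} \xrightarrow{C^0} Id$ and $\Phi(1)$ is $C^0$-rigid. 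The main obstacle is precisely this simultaneous choice: the first quantity pushes $n(k)$ to be large relative to $q_k$ and to the Lipschitz constants of $H_n^{\pm 1}$ — which grow fast, as the conjugating diffeomorphisms are built to spread orbit pieces densely across $U$ (cf. Section \ref{subsec: Achieving dense orbits}) — whereas the third requires $e^{-c_k q_k}$ to beat $\mathrm{Lip}(H_{n(k)})$, so $n(k)$ may not be too large. Reconciling these requires exploiting the freedom in the Anosov--Katok data, namely that at stage $m$ one takes $\alpha_{m+1}$ close enough to $\alpha_m$ that $\sum_{m' > m} d_{C^\infty}(\phi_{m'}, \phi_{m'-1})$ decays rapidly compared with $K_m$ and the denominator $q_{m+1}$ — which is compatible with the divisibility constraint of Lemma \ref{lemma: density with constraint on the numerator} and the $\varepsilon_m$-density conditions of Section \ref{subsec: Achieving dense orbits}, since asking $\alpha_{m+1}$ close to $\alpha_m$ only forces $q_{m+1}$ to be large. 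With such control one may even take $n_k = q_{k+1}$, because $\phi_n^{q_{n+1}} = H_n S_{p_{n+1}} H_n^{-1} = Id$ (here $S$ is $1$-periodic and $p_{n+1} \in \Z$); the exponential--Liouville hypothesis is what makes the argument robust, supplying approximate periods independent of the internal parameters of the construction.
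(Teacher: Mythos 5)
The paper does not prove this statement: it is quoted verbatim from Joksimovic--Seyfaddini \cite{JS24} and used as a black box, so there is no internal proof to compare against. Judged on its own merits, your argument has a genuine gap, and it is precisely the one you flag yourself in the last paragraph.

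The theorem is an \emph{unconditional} statement about a map $\Phi(1)$ that has already been constructed: the only hypothesis is the arithmetic condition that its rotation number $\alpha$ is exponentially Liouville. Your estimate
\[
d_{C^0}(\phi^{q},Id)\;\le\; q\,K_n\,d_{C^0}(\phi,\phi_n)\;+\;\mathrm{Lip}(H_n)\,L'\bigl(\mathrm{dist}(q\alpha,\Z)+q\,|\alpha_{n+1}-\alpha|\bigr)
\]
is correct (the telescoping and the uniform Lipschitz bound via $1$-periodicity of $S$ are fine), but closing it requires an $n=n(k)$ that is simultaneously large enough that $q_kK_n\,d_{C^0}(\phi,\phi_n)$ is small and small enough that $\mathrm{Lip}(H_n)e^{-c_kq_k}$ is small. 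Nothing in the hypotheses guarantees such a window exists: the construction fixes the growth of $K_n$ and the convergence rate of $d_{C^0}(\phi,\phi_n)$ once and for all, and the exponentially Liouville condition constrains only the limit $\alpha$, not these rates. Your proposed fix --- go back and choose each $\alpha_{m+1}$ closer to $\alpha_m$ --- proves a different (weaker) statement, namely that the Anosov--Katok parameters \emph{can be tuned} so that the limit is $C^0$-rigid. A telling symptom: with that re-tuning your own observation $\phi_n^{q_{n+1}}=H_nS_{p_{n+1}}H_n^{-1}=Id$ plus fast convergence already yields rigidity with no arithmetic hypothesis whatsoever, so the exponentially Liouville condition plays no essential role in your argument, whereas it is the entire content of the cited theorem.

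The actual proof in \cite{JS24} works directly on the limit map and avoids the approximating sequence at the critical step. The two inputs are: (a) a H\"older-type inequality bounding $d_{C^0}(\psi,Id)$ by a product of a positive power of $\gamma(\psi)$ and a power of the $C^1$-size of $\psi$; and (b) the computation, exactly as in Proposition \ref{prop: gamma continuity in general case}, that $\gamma(\phi^{q})=\gamma(S_{q\alpha})\lesssim \mathrm{dist}(q\alpha,\Z)$. Since $\|d(\phi^{q})\|$ grows at most exponentially in $q$ (with base depending only on $\phi$), the exponentially Liouville condition --- which provides $\mathrm{dist}(q\alpha,\Z)<e^{-cq}$ with $c$ \emph{arbitrarily large} --- beats that exponential growth along a subsequence, giving $\phi^{q_k}\xrightarrow{C^0}Id$. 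If you want a self-contained proof rather than a citation, you would need to establish such a $\gamma$-versus-$C^0$ comparison; the telescoping estimate alone cannot substitute for it.
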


This Theorem has the following implication.

\begin{coro}\label{coro: our map has no isolated points}
Under the same assumption as in Theorem \ref{thm: cinfty rigidity of AKPR}, the set $\text{Im}(\Phi)\in \Ham(M,\omega)$ has no isolated point for the $C^0$-topology.
\end{coro}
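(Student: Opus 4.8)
The plan is to deduce Corollary \ref{coro: our map has no isolated points} from Theorem \ref{thm: cinfty rigidity of AKPR} by exploiting the group-homomorphism structure of $\Phi$ together with $C^0$-rigidity of $\Phi(1)$. The key point is that $C^0$-rigidity gives a sequence $n_k \to \infty$ with $\Phi(1)^{n_k} = \Phi(n_k) \xrightarrow{C^0} Id$, and that for a group homomorphism the topological behaviour near an arbitrary point $\Phi(q)$ is a translate of the behaviour near $Id$. So it suffices to show that $Id$ is not isolated in $\mathrm{Im}(\Phi)$, and then transport this.

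First I would record that $\Phi(n_k) \neq Id$ for all $k$: indeed, as established in the proof of Proposition \ref{prop: gamma continuity in general case} (in the general case) or Proposition \ref{prop: gamma continuity in toric case} (in the circle-action case), one has $\gamma(\Phi(m)) = \gamma(S_{\alpha m})$ for every integer $m$, and since $\alpha$ is irrational (being exponentially Liouville) this vanishes only for $m=0$; as $n_k \to \infty$ we have $n_k \neq 0$, hence $\Phi(n_k) \neq Id$. Thus $(\Phi(n_k))_k$ is a sequence of elements of $\mathrm{Im}(\Phi) \setminus \{Id\}$ that $C^0$-converges to $Id$, which shows $Id$ is not an isolated point of $\mathrm{Im}(\Phi)$ for the $C^0$-topology.

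Next I would transport this to an arbitrary point. Fix $q \in \mathbb{Q}$ and consider $\Phi(q) \in \mathrm{Im}(\Phi)$. The elements $\Phi(q + n_k) = \Phi(q)\,\Phi(n_k) \in \mathrm{Im}(\Phi)$ are all distinct from $\Phi(q)$ (by the same $\gamma$-argument, $\Phi(q+n_k) = \Phi(q)$ would force $\Phi(n_k) = Id$), and since composition on the left by the fixed homeomorphism $\Phi(q)$ is $C^0$-continuous — $C^0$-convergence $\psi_k \xrightarrow{C^0} Id$ implies $\Phi(q)\psi_k \xrightarrow{C^0} \Phi(q)$ — we get $\Phi(q+n_k) \xrightarrow{C^0} \Phi(q)$. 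Hence every point of $\mathrm{Im}(\Phi)$ is a $C^0$-limit of other points of $\mathrm{Im}(\Phi)$, i.e. $\mathrm{Im}(\Phi)$ has no isolated point for the $C^0$-topology.

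The only mild subtlety — and the step I would be most careful about — is the $C^0$-continuity of left translation by $\Phi(q)$. Since $\Phi(q)$ is a fixed element of $\mathrm{Ham}(M,\omega)$, hence in particular a (compactly supported, when working in $\mathbb{R}^{2n}$) homeomorphism, and the $C^0$-metric on $\mathrm{Homeo}$ is (at least locally, on compact manifolds or the relevant support) right-invariant and left translation is uniformly continuous, this is standard; one may also argue directly that $d_{C^0}(\Phi(q)\psi_k, \Phi(q)) = \sup_x d(\Phi(q)\psi_k(x), \Phi(q)(x)) \to 0$ by uniform continuity of $\Phi(q)$ applied to $d_{C^0}(\psi_k, Id) \to 0$. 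This finishes the proof.
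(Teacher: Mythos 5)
Your argument is essentially the paper's: invoke Theorem~\ref{thm: cinfty rigidity of AKPR} to get $\Phi(n_k)\xrightarrow{C^0}Id$, conclude that $Id$ is not isolated in $\mathrm{Im}(\Phi)$, and then transport to arbitrary $\Phi(q)$ using the group-homomorphism property. The paper's proof is terser and leaves implicit two points you spell out correctly: that $\Phi(n_k)\neq Id$ (needed for non-isolation; your $\gamma$-norm computation $\gamma(\Phi(m))=\gamma(S_{\alpha m})$ with $\alpha$ irrational handles this cleanly) and that left translation by the fixed homeomorphism $\Phi(q)$ is $C^0$-continuous. Both are the right details to check, so your proof is a more careful version of the same argument.
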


\begin{proof}[Proof of Corollary \ref{coro: our map has no isolated points}]
From Theorem \ref{thm: cinfty rigidity of AKPR}, we get $(n_k)_k$ a sequence of integers such that for the $C^0$-topology $\lim_{k\to \infty}\Phi(n_k)=0$, this implies that $Id=\Phi(0)$ is not isolated, since $\Phi$ is a group homomorphism, then for every $q \in \mathbb Q$, the point $\Phi(q)$ is also not isolated. This finishes the proof of the corollary.
\end{proof}

We can now prove Proposition \ref{prop: uncountable elts that are homeos}.

\begin{proof}[Proof of Proposition \ref{prop: uncountable elts that are homeos}]
Since $\text{Im}(\Phi)$ is an abelian subgroup of $\Ham(\mathbb CP^n,\omega_{FS})$ and using the $C^0$-continuity of the $\gamma$-norm, its closure for the $C^0$-topology is also an abelian subgroup of $\widehat{\Ham}(\mathbb CP^n,\omega)$. We have the following inclusions
\[\overline{\text{Im}(\Phi)}^{C^0}\subset \text{Im}(\widehat{\Phi}) \subset \widehat{\Ham}(\mathbb CP^n,\omega).\]
From Proposition \ref{prop: gamma continuity in toric case}, we get that 
\[G:=\widehat{\Phi}^{-1}\left(\overline{\text{Im}(\Phi)}^{C^0}\right)\]
is a subgroup of $S^1$. From Corollary \ref{coro: our map has no isolated points}, we get that the $C^0$-closure of $\text{Im}(\Phi)$ has no isolated points, and from Lemma \ref{lemma: complete metric no isolated is uncountable} it is uncountable. This means that $G$ is also uncountable, and since its image under $\widehat{\Phi}$ lies in $\Ham(\mathbb CP^n,\omega)$, the elements of the image are true homeomorphisms.
\end{proof}

If one can show the $C^\infty$-rigidity of maps obtained by the Anosov-Katok method with exponentially Liouville rotation number, one could extend Proposition \ref{prop: uncountable elts that are homeos} to additionally find an uncountable number of diffeomorphisms in $\text{Im}(\widehat{\Phi})$. We now prove Lemma \ref{lemma: complete metric no isolated is uncountable}.

\begin{proof}[Proof of Lemma \ref{lemma: complete metric no isolated is uncountable}]
Let $(X,d)$ be a complete metric space with no isolated points. We construct $2^\omega$ Cauchy sequences that converge to pairwise different points. We inductively construct sets $A_n$ of $2^n$ pairwise disjoint balls as follows. Let $B$ be a ball of radius $\varepsilon$. Since no isolated points exist, we find $B_1,B_2\subset B$ two non-empty disjoint balls with a radius smaller than $\varepsilon \over 2$. We define 
$$A_{n+1} = \bigcup_{B \in A_n}\{B_1,B_2\}$$
while $A_0 =\{B\}$ is a singleton of an arbitrary ball. For any decreasing sequence 
$$E_0\supset E_2\supset ...\supset E_n\supset ...$$
of balls in $A = \bigcup_{n\in \mathbb Z_\geq 0}A_n$ we have that $E =\bigcap_{n\in \mathbb Z_{\geq 0}}E_n$ is non-empty (since $X$ is complete). There are $2^\omega$ such sequences, which all lead to pairwise disjoint sets $E$. This finished the proof.
\end{proof} 

\bibliographystyle{alpha}
\bibliography{biblio}

\def\cprime{$'$}
\begin{thebibliography}{Hum08}

\bibitem[AK70]{AK70}
D.~V. Anosov and A.~B. Katok.
\newblock {New examples of ergodic diffeomorphisms of smooth manifolds}.
\newblock {\em Uspehi Mat. Nauk}, pages 173--174, 1970.

\bibitem[Cho20]{Cho20}
A.~Chor.
\newblock {Eggbeater dynamics on symplectic surfaces of genus 2 and 3}, 2020.

\bibitem[FO99]{FuOn99}
K.~Fukaya and K.~Ono.
\newblock {The Arnold conjecture and Gromov-Witten invariant}.
\newblock {\em Topology}, pages 933--1048, 1999.

\bibitem[GG18]{GG18}
V.~Ginzburg and B.~Gürel.
\newblock {Hamiltonian pseudo-rotations of projective spaces}.
\newblock {\em Invent. Math.}, pages 1081--1130, 2018.

\bibitem[Hum08]{Hum}
V.~Humilière.
\newblock {On some completions of the space of Hamiltonian maps}.
\newblock {\em Bulletin de la SMF}, 2008.

\bibitem[JS24]{JS24}
D.~Joksimovic and S.~Seyfaddini.
\newblock {A Hölder-type inequality for the C0 distance and Anosov-Katok pseudo-rotations}.
\newblock {\em IMRN}, pages 6303--6324, 2024.

\bibitem[Kaw22]{Kaw22}
Y.~Kawamoto.
\newblock {On C0-continuity of the spectral norm for symplectically non-aspherical manifolds}.
\newblock {\em IRMN}, page 17187–17230, 2022.

\bibitem[Kha22]{Kha22}
M.~Khanevsky.
\newblock {Hofer's distance between eggbeaters and autonomous Hamiltonian diffeomorphisms on surfaces}, 05 2022.

\bibitem[LM95]{LM}
F.~Lalonde and D.~McDuff.
\newblock The geometry of symplectic energy.
\newblock {\em Ann. of Math.}, pages 349--371, 1995.

\bibitem[Ono06]{On06}
K.~Ono.
\newblock {Floer-Novikov cohomology and the flux conjecture}.
\newblock {\em Geom. and. Func. Anal.}, 16:981--1020, 2006.

\bibitem[Pal74]{Pal}
J.~Palis.
\newblock Vector fields generate few diffeomorphisms.
\newblock {\em Bull. Amer. Math. Soc.}, pages 503--505, 1974.

\bibitem[PS16]{PS}
L.~Polterovich and E.~Shelukhin.
\newblock {Autonomous Hamiltonian flows, Hofer's geometry and persistence modules}.
\newblock {\em Selecta Mathematica}, 22:227--296, 2016.

\bibitem[PSS17]{PSS}
L.~Polterovich, E.~Shelukhin, and V.~Stojisavljevi\'c.
\newblock {Persistence modules with operators in Morse and Floer theory}.
\newblock {\em Mosc. Math. J.}, pages 757--786, 2017.

\bibitem[RS22]{LRS22}
F.~Le Roux and S.~Seyfaddini.
\newblock {The Anosov-Katok method and pseudo-rotations in symplectic dynamics}.
\newblock {\em J.~ Fixed Point Theory Appl.}, 2022.

\bibitem[She24]{She}
E.~Shelukhin.
\newblock {A symplectic Hilbert-Smith conjecture}.
\newblock 2024.

\bibitem[Sto]{Sto}
V.~Stojisavljevi\'c.
\newblock {Private communication}.

\bibitem[Vit92]{Vit92}
C.~Viterbo.
\newblock {Symplectic geometry as the geometry of generating functions}.
\newblock {\em Math. Ann.}, 1992.

\bibitem[Zha19]{Zha}
J.~Zhang.
\newblock {p-cyclic persistent homology and Hofer distance}.
\newblock {\em J. Symp. Geom.}, 17:857--927, 2019.

\end{thebibliography}

\end{document}